\theoremstyle{plain}
\newtheorem*{qu}{Question}
\newtheorem{theo}{Theorem}[subsection]
\newtheorem{theore}{Theorem}[section]
\newtheorem{pr}[theo]{Proposition}
\newtheorem{corol}[theore]{Corollary}
 \newtheorem{lem}[theo]{Lemma}
 \newtheorem{coro}[theo]{Corollary}
\theoremstyle{remark}
\newtheorem{rema}[theo]{Remark}
\theoremstyle{definition}
\newtheorem{defi}[theo]{Definition}
\newtheorem{exe}[theo]{Example}
 \newcommand\lan{\langle}
\newcommand\ra{\rangle}
\newcommand\ob{^{-1}}
\newcommand\obj{\operatorname{Obj}}
\newcommand\id{\operatorname{id}}
\newcommand\cu{\underline{C}}
\newcommand\du{\underline{D}}
\newcommand\au{\underline{A}}
\newcommand\bu{\underline{B}}
\newcommand\n{\mathbb{N}}
\newcommand\z{{\mathbb{Z}}}
\newcommand\re{{\mathbb{R}}}
\newcommand\zop{{\mathbb{Z}[\frac{1}{p}]}}
\newcommand\q{{\mathbb{Q}}}
\newcommand\p{\mathbb{P}}
\newcommand\codim{\operatorname{codim}}
\newcommand\lam{\Lambda}
\newcommand\al{\alpha}
\newcommand\be{\beta}
\newcommand\gam{\gamma}
\newcommand\de{\delta}
\newcommand\lvect{L\mbox{-}\operatorname{vect}}
\newcommand\ns{\{0\}}
\DeclareMathOperator\imm{\operatorname{Im}}
\DeclareMathOperator\co{\operatorname{Cone}}
\DeclareMathOperator\kar{\operatorname{Kar}}
\newcommand\hw{{\underline{Hw}}}
\newcommand\karw{{\operatorname{Kar}^w_{\max}}}
\newcommand\wkar{\operatorname{Kar}^w_{\min}}
\newcommand\bb{{\mathcal{B}}}
\newcommand\sss{{\mathcal{S}}}
\newcommand\spe{\operatorname{Spec}}
\newcommand\kgl{\operatorname{KGl}}
\newcommand\kglp{\operatorname{KGl}'}
\newcommand\sht{SH}
\newcommand\dk{\mathcal{DK}}
\newcommand\mgl{\operatorname{MGl}}
\newcommand\shmgl{D^{\operatorname{MGl}}}
\newcommand\md{\mathcal{D}}
\newcommand\mdc{\mathcal{D}^c}
\newcommand\oo{{\pmb{1}}}
\newcommand\wchow{{w_{\operatorname{Chow}}}}
\newcommand\chow{\operatorname{Chow}}
\newcommand\dmgm{DM_{gm}}
\newcommand\dm{DM}
\numberwithin{equation}{subsection}
\begin{document}

 \title{On constructing weight structures and extending them to idempotent completions} 
 \author{Mikhail V. Bondarko, Vladimir A. Sosnilo
   \thanks{ 
 Research is supported by the Russian Science Foundation grant no. 16-11-10073.}}\maketitle
\begin{abstract}
In this paper we describe a new method for constructing a weight structure $w$ on a triangulated category $\cu$.

For a given $\cu$ and $w$ it allows us to give a fairly comprehensive (and new) description of triangulated categories containing $
\cu$ as a dense subcategory 
(i.e., of subcategories of the idempotent  completion of $\cu$ that contain $\cu$; we call them {\it  idempotent extensions}  of $\cu$) 
to which $w$ extends. In particular, any bounded above or below $w$ extends to any idempotent extension of $\cu$; however, we illustrate by an example that $w$ does not extend to the  idempotent  completion of $\cu$ in general.

We also describe an application of our results to certain triangulated categories of (relative) motives.
\end{abstract}

\tableofcontents

 \section*{Introduction}

In this article we consider the following questions.

\begin{qu} When can a weight structure $w$ for a triangulated category  $\cu$ be  extended to its idempotent completion $\kar(\cu)$? 

More generally, when can $w$ be  extended to  some full triangulated subcategory $\cu'$ of $\kar(\cu)$ containing $\cu$ (we will call a category $\cu'$ satisfying these conditions an {\it idempotent extension} of $\cu$)?

\end{qu}

We provide the following answer.

\begin{theore}
\begin{enumerate}
\item\label{i1} If an extension of $w$ to an idempotent extension of  $\cu$ exists, then it is unique.

\item\label{i2} If $w$ is  bounded above or bounded below, the extension exists.

\item\label{i3} More generally, if an idempotent extension admits an extended weight structure, then it is contained in the category $\karw(\cu)\subset \kar(\cu)$; see Definition \ref{dpkar} for the notation. Even though    $\karw(\cu)$ may contain sub-idempotent extensions not  admitting  extended weight structures,
$\karw(\cu)$ itself admits an extended weight structure.

\item\label{i4} There exist triangulated categories $\cu$ admitting weight structures such that $\karw(\cu)\not\cong \kar(\cu)$   
 (and so,  $w$ does not extend to $\kar(\cu)$).

\item\label{i5} There exists an idempotent extension of $\cu$ (for any $(\cu,w)$) such that $w$ extends to it and the heart of this extended weight structure is idempotent complete.\footnote{This result is important for \cite{bkw} (and so, to the study of the conservativity of the so-called weight complex functor).} 
\end{enumerate}
\end{theore}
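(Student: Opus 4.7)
My strategy is to treat (i1), (i3), (i2), (i5), (i4) in that order, since the canonical ``maximal'' extension produced in (i3) controls each of the other statements. For uniqueness (i1): if $w'$ extends $w$ to $\cu'$, then $\cu'_{w'\le 0}$ is closed under retracts in $\cu'$ and its intersection with $\cu$ equals $\cu_{w\le 0}$ (since any weight decomposition in $\cu'$ of an object of $\cu$ restricts to one in $\cu$). Because every $X\in\cu'$ is a retract of some object of $\cu$, this forces $\cu'_{w'\le 0}$ to be exactly the class of retracts in $\cu'$ of objects of $\cu_{w\le 0}$, and symmetrically for $\cu'_{w'\ge 0}$. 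Thus $w'$ is determined by $w$.

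For (i3) I would set $\karw(\cu)\subset\kar(\cu)$ to be those $X$ that, for every $n\in\z$, admit a weight decomposition $A_n\to X\to B_n\to A_n[1]$ with $A_n$ a retract of an object of $\cu_{w\le n}$ and $B_n$ a retract of an object of $\cu_{w\ge n+1}$, and take as weight subclasses the corresponding retract-closed intersections. Orthogonality passes to retracts, and weight decompositions are built in; the real issue is that $\karw(\cu)$ is triangulated. Given $f:X\to Y$ in $\karw(\cu)$ and $n\in\z$, an octahedral construction applied to the weight decompositions of $X$ and $Y$ at level $n$ (and the resulting morphism between their $\le n$- and $\ge n+1$-parts) produces a weight decomposition of $\co(f)$ whose pieces are cones of morphisms in $\kar(\cu_{w\le n})$ and $\kar(\cu_{w\ge n+1})$, and such cones are themselves retracts of objects of $\cu_{w\le n}$ and $\cu_{w\ge n+1}$ by the standard lifting of triangles from retracts to representatives. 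Maximality of $\karw(\cu)$ among idempotent extensions carrying an extended $w$ is then immediate from the criterion used in (i1).

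For (i2), if $w$ is bounded above and $X\in\kar(\cu)$ is a retract of $Y\in\cu$, any weight decomposition of $Y$ at level $n$ retracts to one of $X$ whose terms land in the required retract-closed classes, so $\karw(\cu)=\kar(\cu)$; the bounded-below case is dual. For (i5), take $\cu''$ to be the triangulated subcategory of $\kar(\cu)$ generated by $\cu$ together with $\kar(\hw)$. Since objects of the heart are trivially weight-decomposable at every level (taking $A_n=X,\ B_n=0$ for $n\ge 0$, and dually for $n\le -1$), one has $\kar(\hw)\subset\karw(\cu)$, so by (i3) the category $\cu''$ inherits an extension of $w$; its heart contains $\kar(\hw)$ and is therefore idempotent complete.

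I expect the main obstacle to be (i4): producing an explicit $(\cu,w)$ with $\karw(\cu)\subsetneq\kar(\cu)$. A promising source is to take $\cu=K^b(B)$ for a small non-Karoubian additive category $B$, equipped with the stupid weight structure so that $\hw=B$, and to exhibit an idempotent endomorphism of a short complex whose corresponding retract in $\kar(\cu)$ admits no weight decomposition at a critical level. The delicate point is to rule out such a decomposition in $\kar(\cu)$ rather than merely in $\cu$; the argument should proceed by extracting from any hypothetical decomposition an enforced splitting of a non-split idempotent in $B$, reached via the weight-complex (heart) functor.
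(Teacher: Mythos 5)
Your reductions for (i1) and (i3) are essentially the paper's (the paper likewise proves uniqueness by showing that $\cu'_{w'\le 0}$ must be the retraction-closure of $\cu_{w\le 0}$, the key point being that for $N\in \cu'_{w'\le 0}$ a retract of $M\in\obj\cu$ one has $N\perp w_{\ge 1}M[1]$, so $\id_N$ factors through $w_{\le 0}M$; and it defines $\karw(\cu)$ as $\lan \obj\kar(\cu^-)\cup\obj\kar(\cu^+)\ra_{\kar(\cu)}$, which is triangulated by construction and so sidesteps your octahedral verification). But there is a genuine gap at the heart of your (i2). You claim that ``any weight decomposition of $Y$ at level $n$ retracts to one of $X$.'' This is false: idempotents do not lift along distinguished triangles, so a retract of an extension need not be an extension of retracts. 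Note that your argument nowhere uses boundedness, so if it were correct it would show that \emph{every} $w$ extends to $\kar(\cu)$ --- contradicting (i4), which you also intend to prove. The correct mechanism (and the place where boundedness enters) is: (a) the class of objects admitting a ``pre-weight decomposition,'' with pieces in the envelopes of the two retract-closed classes, is extension-closed; (b) for $N$ a retract of $M$ one has the rotation trick, namely $N$ lies in the extension-closure of $\{N[2k]\}\cup\{M[i]:0\le i<2k\}$; (c) if $w$ is bounded above one can choose $k$ so large that $M[2k]\in\cu_{w\le n}$, whence $N[2k]$ is in the retract-closed ``$\le n$'' class and admits the trivial decomposition, and (a) then produces a decomposition of $N$. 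A similar slip affects your (i5): containment in $\karw(\cu)$ is \emph{necessary but not sufficient} for the extension to exist (the paper's \S\ref{ssspkar} exhibits idempotent extensions inside $\karw(\cu)$ to which $w$ does not extend), so ``by (i3)'' does not give you a weight structure on $\lan\obj\cu\cup\obj\kar(\hw)\ra$; you must instead apply the generation criterion directly, noting that every generator is either an object of $\cu$ or a retract of an object of $\cu_{w=0}$ and hence admits pre-weight decompositions at every level.

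Your plan for (i4) also cannot succeed: $K^b(\bu)$ with the stupid weight structure is \emph{bounded}, so by (i2) the weight structure extends to every idempotent extension and $\karw(K^b(\bu))=\kar(K^b(\bu))$; any counterexample must be unbounded in both directions. The paper's example is the unbounded homotopy category $\cu=K(\au)$ for $\au$ the finitely generated projectives over a ring with $K_{-1}(R)\ne 0$: by Schlichting's results $K(\au)$ is then not idempotent complete, whereas the subcategories $\cu^+$ and $\cu^-$ of one-sidedly bounded complexes \emph{are} idempotent complete, so $\karw(\cu)\cong\cu\not\cong\kar(\cu)$. In particular, no splitting of an idempotent in the heart is ever obstructed (indeed $\hw$ need not be Karoubian for $w$ to extend, by (i5)); the obstruction lives in negative $K$-theory, i.e., in $K_0(\kar(\cu))/\imm K_0(\cu)$, not in $\hw$, so your proposed ``extract a non-split idempotent in $B$ via the heart'' strategy aims at the wrong invariant.
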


This is a very significant improvement  of the previous state of the art \cite[Proposition 5.2.2]{bws}.

Moreover,  our current arguments are substantially easier than the ones used for the proof of loc. cit. The existence of weight structures statements in the theorem (in its parts \ref{i2}, \ref{i3}, and \ref{i5}) are proved using a somewhat technical (yet  not really difficult) Theorem \ref{tex}. 
As another consequence of 
 the latter theorem we obtain the following result (see \S\ref{smot} for  the definitions, references, and a generalization). 

\begin{corol}\label{cint}
Let $B$ be a Noetherian separated excellent scheme of finite Krull dimension and of exponential characteristic $p$; let $R$ be a commutative unital $\zop$-algebra. Choose some generalized dimension function $\delta$ on separated schemes of finite type over $B$ (see Definition \ref{ddf} below); let $j\in \z$.
Then the  Chow weight structure  (as constructed in \cite{bonivan}; see 
 Proposition \ref{pwchow}(1) below) on the motivic category $\dm^c_{cdh}(B,R)$ (defined in \cite{cdint}) restricts to $\dm^c_{cdh}(B,R)_{\de \le j}$.
\end{corol}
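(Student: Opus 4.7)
The plan is to realize $\dm^c_{cdh}(B,R)_{\de\le j}$ as an idempotent extension of a smaller triangulated category $\cu$ on which the Chow weight structure is manifestly present, and then to invoke parts~\ref{i1} and~\ref{i2} of the main theorem (whose existence assertions rest on Theorem~\ref{tex}) to transfer $\wchow$ to the full $\de$-truncated subcategory.

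To implement this, I would first introduce $\cu\subset \dm^c_{cdh}(B,R)_{\de\le j}$ as the full triangulated subcategory generated by motives of the form $f_*\oo(n)[2n]$ (together with the Tate twists appropriate to the $R$-coefficient setting) for projective morphisms $f\colon X\to B$ with $X$ regular and $\delta(X)\le j$. By the construction of the Chow weight structure in \cite{bonivan} (summarized in Proposition~\ref{pwchow}(1)), these objects lie in the heart of $\wchow$, so $\wchow$ restricts to $\cu$ and the restricted weight structure is bounded.

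Next, I would verify that $\dm^c_{cdh}(B,R)_{\de\le j}$ is an idempotent extension of $\cu$ in the sense of the paper, i.e., that every object of it lies in $\kar(\cu)$. This reduces, via the description of compact generators of $\dm^c_{cdh}(B,R)_{\de\le j}$ in \cite{cdint}, to presenting the motive of an arbitrary separated finite-type $B$-scheme $Y$ with $\delta(Y)\le j$ as an iterated cone, up to direct summands, of objects of the form listed above. Gabber's prime-to-$p$ alteration theorem combined with cdh-descent should accomplish this after inverting $p$, which is precisely why $R$ is taken to be a $\zop$-algebra.

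With these two ingredients assembled, part~\ref{i2} of the main theorem produces an extended weight structure on $\dm^c_{cdh}(B,R)_{\de\le j}$; the uniqueness statement~\ref{i1}, together with the compatibility of weight decompositions inside the ambient $\dm^c_{cdh}(B,R)$, then forces this extension to coincide with the restriction of $\wchow$, which is exactly the desired restriction. I expect the principal obstacle to be the second step — the idempotent-extension/density argument — because it requires matching the precise description of generators of the $\de$-truncation with a Gabber-style alteration argument that remains compatible with the $\zop$-coefficient structure and with the $\de$-dimension bound at every inductive stage.
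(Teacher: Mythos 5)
Your overall strategy is in the right spirit (reduce to the general existence machinery of \S\ref{smain}), but the load-bearing step fails in the stated generality. You want $\dm^c_{cdh}(B,R)_{\de\le j}$ to be an idempotent extension of the subcategory generated by Chow motives $f_*(\oo_P)\lan n\ra$ with $P$ regular, $f$ proper and $\de(P)\le j$; i.e., you need $\de$-bounded Chow motives to densely generate the whole level of the dimension filtration. This is a resolution-of-singularities-type assertion: as Remark \ref{rwchow}(3) spells out, with $\zop$-coefficients the generation of $\mdc(Y)$ by $\md$-Chow motives is currently known only when $Y$ is essentially of finite type over a field, and even there the refinement that the $\de\le j$ level is generated by $\de\le j$ Chow motives has "never been written down". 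For a general Noetherian separated excellent $B$ of finite Krull dimension, Gabber's alteration theorem with regular source plus cdh-descent does not deliver this, so your second step cannot be completed as proposed; your argument would only prove the corollary under substantially stronger hypotheses on $B$.

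The paper's proof (Corollary \ref{cmot}) avoids this density claim entirely. It applies Theorem \ref{tex}(II) directly to $\cu'=\mdc_{\de\le j}(B)$, taking $\cu'_-$ and $\cu'_+$ to be the shifted classes $\{u_!(\oo_U)\lan\de(U)\ra[-s]\}$ and $\{u_*(\oo_U)\lan\de(U)\ra[s]\}$ ($U$ regular, $\de(U)\le j$, $s\ge 0$), and $C''=\{f_!(\oo_P)\lan\de(P)\ra\}$ for $P$ regular, $\de(P)\le j$ and $f$ an \emph{arbitrary} (not necessarily proper) $B$-morphism; these do densely generate the level by its very definition. The only nontrivial input is then that each $c[i]$, $c\in C''$, admits a pre-weight decomposition \emph{inside} the level — this is Proposition \ref{pwdimf}, a consequence of the geometric Theorem 3.4.2 of \cite{bkl} (resting on Gabber's results from \cite{illgabb}). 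That is exactly the ingredient your sketch is missing: Theorem \ref{tex}(II) is designed so that one never has to realize objects as retracts of complexes of heart objects, only to decompose a convenient generating class. If you want to keep your route via Theorem \ref{tpkar}(II.2), you would still have to cite the same Proposition \ref{pwdimf}-type input to produce the cones, at which point the detour through idempotent extensions buys nothing.
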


We also prove that  any triangulated category $\cu$ that is densely generated (see \S\ref{snotata}) by its negative (see Definition \ref{dwso}(\ref{id6})) additive subcategory $\bu$  
admits a bounded weight structure whose heart is the  retraction-closure of $\bu$ in $\cu$. This statement generalizes the widely cited Theorem 4.3.2(II) of \cite{bws}. 
Note that the latter theorem (along with the aforementioned Proposition 5.2.2 of ibid.) has found several applications to motives, to representation theory (see \cite{postov}, \cite{koyang}
and \cite{kayang};\footnote{In these papers weight structures were called co-$t$-structures following \cite{konk}.}\ cf. also \cite{plam} and \cite{kellerw}); it was also applied to the mixed Hodge theory in \cite{vologod} and to the study of the stable homotopy category of (topological) spectra in  \cite[\S4.6]{bws} (along with \cite[\S2.4]{bkw}).\footnote{The authors also plan to generalize the corresponding  "topological" results to categories of equivariant spectra.}


Let us  now describe the contents  of the paper.

In \S\ref{sprelim} we introduce some basic (mostly, categorical) notation and recall some of the theory of weight structures. None of the statements in this section are really new.

In \S\ref{smain} we prove the aforementioned general existence of  weight structures results.

In \S\ref{sexamples} we demonstrate by simple examples that an (unbounded) weight structure $w$ for $\cu$ does not necessarily extend to all idempotent extensions of $\cu$. Moreover, the category $\kar(\cu)$ does not have to be equivalent to 
its (essentially) maximal triangulated subcategory $\karw(\cu)$ 
to which $w$ extends\footnote{In contrast, for a triangulated category $\du$ with a $t$-structure, the $t$-structure extends to the idempotent completion $\kar(\du)$ (see Theorem 15 of \cite{idemptstr}).}, and there also can exist idempotent extensions of $\cu$ inside $\karw(\cu)$ such that $w$ does not extend to them.

In \S\ref{smot} we describe some ("relative") motivic applications of 
Theorem \ref{tex} and prove (a generalization of) Corollrary \ref{cint}.

The authors are deeply grateful  to the referee, to prof. Ch. Weibel, and to prof. A. Zvonareva for their very useful comments.

\section{Preliminaries }\label{sprelim}

In \S\ref{snotata} we introduce some 
notation and 
conventions, and recall some results on triangulated categories. 
In \S\ref{ssws}  we 
recall 
some basics on weight structures.

\subsection{Some terminology and a few results 
 on triangulated categories}\label{snotata}
  
For categories $C$ and $D$ we write 
$D\subset C$ if $D$ is a full 
subcategory of $C$. 
    
 For a category $C$ and  $X,Y\in\obj C$ 
we will write $C(X,Y)$ for the set of  $C$-morphisms from  $X$ into $Y$.
We will say that $X$ is  a {\it
retract} of $Y$ if $\id_X$ can be factored through $Y$. Note that if $C$ is triangulated  
then $X$ is a  retract of $Y$ if and only if $X$ is a direct summand.

For a category $C$ the symbol $C^{op}$ will denote its opposite category.

For a subcategory $D\subset C$
we will say that $D$ is {\it retraction-closed} in $C$ if $D$ contains all retracts of its objects in $C$. 
  We will call the
smallest retraction-closed subcategory $\kar_C(D)$ of $C$ containing $D$ (here "$\kar$" is for Karoubi) 
 the {\it retraction-closure} of $D$ in $C$. 
The class $\obj \kar_C(D)$ will also be (abusively) called  the retraction-closure of $D$; so we will say that this class is retraction-closed in $C$.

The {\it  idempotent  completion} $\kar(\bu)$ (no lower index) of an additive
category $\bu$ is the category of "formal images" of idempotents in $\bu$
(so, $\bu$ is embedded into a 
category that is {\it  idempotent  complete}, i.e., any idempotent endomorphism splits in it).

The symbols $\cu$ and $\cu'$ will always denote some triangulated categories.
 We will use the
term {\it exact functor} for a functor of triangulated categories (i.e.,
for a  functor that preserves the structures of triangulated
categories).

A class $D\subset \obj \cu$ will be called {\it extension-closed} if $0\in D$ and for any
distinguished triangle $A\to B\to C$  in $\cu$ we have the following implication: $A,C\in
D\implies B\in D$. In particular, any extension-closed $D$ is strict in $\cu$ (i.e., contains all objects of $\cu$ isomorphic to its elements).

The full subcategory 
of $\cu$ whose object class  is the smallest extension-closed $D\subset \obj \cu$ containing a given $D'\subset \obj \cu$ will be called the {\it extension-closure} of $D'$.  Sometimes we will also abusively use this term for $D$ itself.

Below we will need the following simple fact.

\begin{lem}\label{lretr}
Let $M,N\in \obj \cu$, $n\ge 0$, and assume that $N$ is a retract of $M$. Then 
$N$ belongs to the extension-closure of $\{N[2n]\}\cup\{M[i],\ 0\le  i< 2n\}$ 
\end{lem}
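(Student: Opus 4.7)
The key case is $n=1$: I claim that whenever $N$ is a retract of $M$ in $\cu$, the object $N$ lies in the extension-closure of $\{M, M[1], N[2]\}$. The lemma then follows by induction on $n$. The base case $n=0$ is trivial. Given the result for some $n$, applying the case $n=1$ to the shifted retraction $N[2n]$ of $M[2n]$ shows that $N[2n]$ lies in the extension-closure of $\{M[2n], M[2n+1], N[2n+2]\}$; substituting this into the inductive hypothesis yields the case $n+1$.

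To establish the case $n=1$, fix $i\colon N\to M$ and $p\colon M\to N$ with $p\circ i=\id_N$ and complete these to distinguished triangles $N\xrightarrow{i} M\to C\to N[1]$ and $M\xrightarrow{p} N\to C'\to M[1]$. Applying the octahedral axiom to the composition $p\circ i=\id_N$ (whose cone is $0$) produces a distinguished triangle $C\to 0\to C'\to C[1]$, forcing $C'\cong C[1]$. The second triangle therefore reads $M\to N\to C[1]\to M[1]$; since $N$ is the middle term, $N$ lies in the extension-closure of $\{M, C[1]\}$. On the other hand, shifting the first triangle by $1$ gives $N[1]\to M[1]\to C[1]\to N[2]$, and the rotation $M[1]\to C[1]\to N[2]\to M[2]$ has $C[1]$ as its middle term, so $C[1]$ lies in the extension-closure of $\{M[1], N[2]\}$. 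Substituting the second containment into the first places $N$ in the extension-closure of $\{M, M[1], N[2]\}$, as required.

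The only genuinely triangulated input is the octahedral identification $C'\cong C[1]$ (equivalently $\co(p)\cong \co(i)[1]$); the remaining manipulations are routine rotations of distinguished triangles combined with the elementary observation that the extension-closure behaves transitively under substitution of generators by extensions built from others. I do not foresee any serious obstacle beyond keeping track of the rotations.
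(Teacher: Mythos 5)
Your argument is correct, but it takes a slightly different route from the paper's one-line proof. The paper simply invokes the standard fact (recorded in \S\ref{snotata}) that in a triangulated category a retract is a direct summand, writes $M\cong N\oplus P$, and exhibits the $2n$ split distinguished triangles $M[2j]\to N[2j]\to P[2j+1]$ and $M[2j+1]\to P[2j+1]\to N[2j+2]$ for $0\le j<n$; reading these from $j=n-1$ down to $j=0$ gives the claim at once. You avoid choosing the complement $P$ explicitly: instead you take the cones $C=\co(i)$ and $C'=\co(p)$, use the octahedron on $p\circ i=\id_N$ to get $C'\cong C[1]$, extract the $n=1$ case from the two resulting triangles, and then induct on $n$. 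The two arguments are using essentially the same triangles (your $C$ is isomorphic to the paper's $P$, and your two triangles for $n=1$ are the paper's two triangles for $j=0$), so the difference is one of packaging: the paper's version is shorter and makes the split nature of all the triangles visible, while yours trades the splitting fact for an appeal to the octahedral axiom plus an induction whose substitution step (transitivity of extension-closures under replacing a generator by objects that build it) you correctly identify as elementary. Both are complete; no gaps.
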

\begin{proof}
Assume that $M\cong N\bigoplus P$. Then the assertion is given by the 
(split) distinguished triangles $M[2j]\to N[2j]\to P[2j+1]$ and $M[2j+1]\to P[2j+1]\to N[2j+2]$ for $0\le j<n$.
\end{proof}

The smallest extension-closed $D\subset \cu$ that is  also closed with respect to retracts and  contains a given $D'\subset \obj \cu$ will be called the {\it envelope} of $D'$.

We will say that  a class $D\subset \obj \cu$ {\it strongly generates} a subcategory $\du\subset \cu$ and write $\du=\lan D\ra_{\cu}$   if $\du$
is the smallest full strict triangulated subcategory of $\cu$ such that $D\subset \obj \du$. Certainly, 
this condition is equivalent to $ \du$ being the extension-closure of $\cup_{j\in \z}D[j]$.

We will say that $D\subset \obj \cu$ {\it densely generates} a subcategory $\du\subset \cu$ whenever $\du$ is smallest  retraction-closed triangulated subcategory of $\cu$ such that $D\subset \obj \du$. Certainly, this condition is equivalent to $\obj \du$ being the envelope of $\cup_{j\in \z}D[j]$.

We will say (following \S1.4 of \cite{thom})  that a full strict triangulated subcategory $\cu$ of a triangulated $\cu'$ is {\it dense} in $\cu'$ if $\kar_{\cu'}\cu=\cu$.
Recall that (according to Theorem 1.5 of \cite{bashli}) the category  $\kar(\cu)$ can be naturally endowed with the structure of a triangulated category so that the  natural embedding functor $\cu\to \kar(\cu)$ is exact. Hence if $\cu$ is a dense subcategory of $\cu'$ then there exists a fully faithful  exact functor $\cu'\to \kar(\cu)$.
Moreover, the subcategory $\cu_1$ of $\cu$ that is strongly generated by some class $D\subset \obj \cu$ is dense in the  subcategory $\cu_2$ of $\cu$ densely generated by $D$.

  
For $X,Y\in \obj \cu$ we will write $X\perp Y$ if $\cu(X,Y)=\ns$.
For $D,E\subset \obj \cu$ we  write $D\perp E$ if $X\perp Y$
 for all $X\in D,\ Y\in E$.
For $D\subset \obj \cu$ the symbol $D^\perp$ will be used to denote the class
$$\{Y\in \obj \cu:\ X\perp Y\ \forall X\in D\}.$$
  Dually, ${}^\perp{}D$ is the class
$\{Y\in \obj \cu:\ Y\perp X\ \forall X\in D\}$. 


In this paper all complexes will be cohomological, i.e., the degree of
all differentials is $+1$. 
We will write $K(\bu)$ for the homotopy category of 
 complexes over an additive category $\bu$. Its full subcategory of
bounded complexes will be denoted by $K^b(\bu)$. 
	
	Since triangulated categories of complexes give examples of weight structures important for the current paper, we recall the following simple statements.
		
		\begin{pr}\label{pkbu}
1.  The full subcategories of $K(\bu)$  corresponding to classes of $\bu$-complexes 
  concentrated in degrees $\ge 0$ and $\le 0$ are idempotent  complete. 

2.   The  classes of bounded $\bu$-complexes that are
homotopy equivalent to complexes
 concentrated in degrees $\ge 0$ and $\le 0$ are retraction-closed in $K^b(\bu)$.  

\end{pr}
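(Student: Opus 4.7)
The plan is to handle both halves of the proposition by the same chain-level technique: a Gaussian-elimination step extracted from the homotopy witnessing the retraction (respectively, the idempotent). The degrees-$\leq 0$ case is dual to the degrees-$\geq 0$ case (pass to $\bu^{op}$), so I will focus on the latter.

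For Part 2, suppose $C\in K^b(\bu)$ is a retract of a complex $D$ concentrated in degrees $\geq 0$. I fix chain-level maps $i\colon C\to D$ and $r\colon D\to C$ together with a chain homotopy $h$ satisfying $ri-\id_C=dh+hd$. Let $-N$ be the lowest nonzero degree in a chosen representative of $C$ and suppose $N>0$. Vanishing of $D$ in negative degrees forces $i^{-N}=r^{-N}=0$, while vanishing of $C$ below $-N$ forces $h^{-N}=0$. Reading the homotopy equation in degree $-N$ then collapses to
\[
\id_{C^{-N}}=-h^{-N+1}\circ d^{-N}_C,
\]
which exhibits $d^{-N}_C$ as a split monomorphism in $\bu$ with retraction $\rho:=-h^{-N+1}$. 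A Gaussian-elimination argument based on this splitting will replace $C$ by a homotopy equivalent bounded complex whose support starts in degree $-N+1$; iterating the procedure reduces the lower bound until $C$ lies in the class described.

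For Part 1, I apply essentially the same idea to an idempotent $p\colon C\to C$ in $K(\bu)$ with $C$ concentrated in degrees $\geq 0$. A splitting of $p$ in $K(\bu)$ can be produced via the mapping telescope of $C\xrightarrow{p}C\xrightarrow{p}\cdots$; since each copy of $C$ vanishes in negative degrees, the telescope inherits this property and so the splitting object already lies in the subcategory of complexes concentrated in degrees $\geq 0$.

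The hard part will be the Gaussian-elimination step itself: peeling off the contractible subcomplex $0\to C^{-N}\xrightarrow{\id}C^{-N}\to 0$ amounts to splitting the induced idempotent $e:=d^{-N}_C\circ\rho$ on $C^{-N+1}$. The image of $e$ is tautologically $C^{-N}$, but extracting the complementary summand needs extra care when $\bu$ is not idempotent complete. I would address this by writing the replacement complex and its homotopy equivalence with $C$ purely in terms of the explicit data $(d^{-N}_C,\rho)$, so that no auxiliary idempotent splittings in $\bu$ are invoked.
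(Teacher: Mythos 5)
Your reduction in Part 2 (reading the homotopy identity in the bottom degree to get $\id_{C^{-N}}=-h^{-N+1}\circ d^{-N}_C$) is correct and is the right starting point, but the step you defer --- the Gaussian elimination itself --- is exactly where the content of the statement lives, and as described it does not go through. Since only the idempotent $e=d^{-N}_C\rho$ splits in $\bu$ (its complement $1-e$ on $C^{-N+1}$ need not), there is no ``complementary summand'' to put in degree $-N+1$; and if you instead keep $C^{-N+1}$ and merely delete $C^{-N}$ (adjusting differentials by $1-e$), the map $C\to \sigma^{\ge -N+1}C$ you obtain only exhibits $C$ as a \emph{retract} of its brutal truncation, not as homotopy equivalent to something supported in degrees $\ge -N+1$ --- which is no progress, since $C$ was already a retract of $D$. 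A construction ``purely in terms of $(d^{-N}_C,\rho)$'' does exist, but it is not the one your wording suggests: one must first stabilize by the contractible complex $C^{-N}\xrightarrow{\id}C^{-N}$ placed in degrees $-N+1,-N+2$, after which the relevant complementary idempotent has image isomorphic to an object of $\bu$ and does split; concretely the replacement complex has $C^{-N+1}$ in degree $-N+1$, $C^{-N+2}\oplus C^{-N}$ in degree $-N+2$ (differential components $d^{-N+1}$ and $\rho$), and $C^j$ above, and one writes explicit mutually inverse homotopy equivalences. Without this (or an equivalent) explicit step the induction has no base to stand on, so the proof is incomplete at its acknowledged ``hard part.''

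Part 1 has a more decisive gap: the mapping telescope of $C\xrightarrow{p}C\xrightarrow{p}\cdots$ requires the countable coproduct $\coprod_{i\ge 0}C$ in $K(\bu)$, which in each degree is an infinite coproduct of a fixed object of $\bu$; this does not exist for a general additive $\bu$ (and if your argument worked verbatim it would show that all of $K(\bu)$ is idempotent complete, contradicting the example in \S\ref{ssosn} of the paper). The whole point --- visible in the paper's citation of \cite[Proposition 4.2.4]{sosn} with $F(-)=\coprod_{i\ge 0}-[2i]$ and of \cite[Theorem 3.1]{schnur} --- is that for complexes concentrated in degrees $\ge 0$ one can form the coproduct of the copies shifted so that their supports escape to infinity: in each fixed degree only finitely many summands are nonzero, so the coproduct exists degreewise in $\bu$ and stays in the subcategory. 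The Eilenberg-swindle/telescope argument must be run with these shifted copies (which is why the shift by $2$ appears, matching Lemma \ref{lretr}); with unshifted copies it simply cannot be set up. Note finally that the paper itself only cites the literature for both parts, so a self-contained argument along your lines would be welcome, but both of its nontrivial steps are currently missing.
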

\begin{proof}
1. This is a part of   \cite[Theorem 3.1]{schnur} (cf. also Proposition 4.2.4 of \cite{sosn}; one should take $F(-)=\coprod_{i\ge 0}-[2i]$  in it).

2. See Remark 6.2.2(1) of \cite{bws}.

\end{proof}

\subsection{Weight structures: basics}\label{ssws}

Let us recall the definition of the main notion of this paper.

\begin{defi}\label{dwstr}

 A couple of subclasses $\cu_{w\le 0}$ and $ \cu_{w\ge 0}\subset\obj \cu$ 
will be said to define a {\it weight structure} $w$ for a triangulated category  $\cu$ if 
they  satisfy the following conditions.

(i) $\cu_{w\le 0}$ and $\cu_{w\ge 0}$ are 
retraction-closed in $\cu$
(i.e., contain all $\cu$-retracts of their elements).

(ii) {\bf Semi-invariance with respect to translations.}

$\cu_{w\le 0}\subset \cu_{w\le 0}[1]$ and $\cu_{w\ge 0}[1]\subset
\cu_{w\ge 0}$.

(iii) {\bf Orthogonality.}

$\cu_{w\le 0}\perp \cu_{w\ge 0}[1]$.

(iv) {\bf Weight decompositions}.

 For any $M\in\obj \cu$ there
exists a distinguished triangle
$X\to M\to Y
{\to} X[1]$
such that $X\in \cu_{w\le 0} $ and $ Y\in \cu_{w\ge 0}[1]$.
\end{defi}

We will also need the following definitions.

\begin{defi}\label{dwso}

Let $i,j\in \z$.

\begin{enumerate}
\item\label{id1} The full subcategory  $\hw\subset \cu$ whose object class is $\cu_{w=0}=\cu_{w\ge 0}\cap \cu_{w\le 0}$ 
 is called the {\it heart} of  $w$.

\item\label{id2} $\cu_{w\ge i}$ (resp. $\cu_{w\le i}$,  $\cu_{w= i}$) will denote $\cu_{w\ge 0}[i]$ (resp. $\cu_{w\le 0}[i]$,  $\cu_{w= 0}[i]$).

\item\label{id3} $\cu_{[i,j]}$  denotes $\cu_{w\ge i}\cap \cu_{w\le j}$; so, this class  equals $\ns$ if $i>j$.

$\cu^b\subset \cu$ will be the category whose object class is $\cup_{i,j\in \z}\cu_{[i,j]}$.

\item\label{id4}  We will  say that $(\cu,w)$ is {\it  bounded}  if $\cu^b=\cu$ (i.e., if
$\cup_{i\in \z} \cu_{w\le i}=\obj \cu=\cup_{i\in \z} \cu_{w\ge i}$).

Respectively, we will call $\cup_{i\in \z} \cu_{w\le i}$ (resp. $\cup_{i\in \z}\cu_{w\ge
i}$) the class of $w$-{\it bounded above} (resp. $w$-{\it bounded below}) objects; we will say that $w$ is bounded above (resp. bounded below) if all the objects of $\cu$ satisfy this property.

\item\label{id5} Let $\cu$ and $\cu'$ 
be triangulated categories endowed with
weight structures $w$ and
 $w'$, respectively; let $F:\cu\to \cu'$ be an exact functor.

$F$ is said to be  {\it  weight-exact}  (with respect to $(w,w')$) if it maps $\cu_{w\le 0}$ into $\cu'_{w'\le 0}$ and 
 $\cu_{w\ge 0}$ into $\cu'_{w'\ge 0}$.

\item\label{id6} Let $\bu$ be a 
full additive subcategory of a triangulated category $\cu$.

We will say that $\bu$ is {\it negative} (in $\cu$) if
 $\obj \bu\perp (\cup_{i>0}\obj (\bu[i]))$.

\end{enumerate}
\end{defi}

\begin{rema}\label{rstws}

1. A  simple (though rather important) example of a weight structure comes from the stupid
filtration on $K(\bu)$ (or on $K^b(\bu)$,  $K^-(\bu)$, or $K^+(\bu)$) for an arbitrary additive category
 $\bu$. 
In either of these  categories we take
$\cu_{w\le 0}$ (resp. $\cu_{w\ge 0}$) to be the class of objects in $\cu$ 
 that are homotopy equivalent to those complexes in $\cu\subset K(\bu)$ that are concentrated in degrees $\ge 0$ (resp. $\le 0$).  Then weight decompositions of objects are given by stupid filtrations of complexes, and the only non-trivial axiom to check  is that the classes 
$\cu_{w\le 0}$ and $\cu_{w\ge 0}$ are retraction-closed in $\cu$; this fact is immediate from Proposition \ref{pkbu}.
 
 The heart of this {\it stupid} weight structure 
is the retraction closure  of $\bu$
 in 
$\cu$.

2. A weight decomposition (of any $M\in \obj\cu$) is (almost) never canonical. 

Still for  $m\in \z$ some choice of a weight decomposition of $M[-m]$ shifted by $[m]$ is often needed (though in the current paper we will only be concerned with  $m$ equal to $0$ or $-1$). So we choose a distinguished triangle \begin{equation}\label{ewd} w_{\le m}M\to M\to w_{\ge m+1}M \end{equation} 
with some $ w_{\ge m+1}M\in \cu_{w\ge m+1}$ and $ w_{\le m}M\in \cu_{w\le m}$. 
 We will   use this notation below (though $w_{\ge m+1}M$ and $ w_{\le m}M$ are not canonically determined by $M$). 
 
3. In the current paper we use the ``homological convention'' for weight structures; 
it was previously used in \cite{wild}, 
 \cite{hebpo}, \cite{brelmot},  \cite{bmm}, \cite{bonivan}, \cite{bkw},  and in \cite{binters},
  whereas in 
\cite{bws} 
 the ``cohomological convention'' was used. In the latter convention 
the roles of $\cu_{w\le 0}$ and $\cu_{w\ge 0}$ are interchanged, i.e., one
considers   $\cu^{w\le 0}=\cu_{w\ge 0}$ and $\cu^{w\ge 0}=\cu_{w\le 0}$. So,  a
complex $X\in \obj K(\au)$ whose only non-zero term is the fifth one (i.e.,
$X^5\neq 0$) has weight $-5$ in the homological convention, and has weight $5$
in the cohomological convention. Thus the conventions differ by ``signs of
weights''; 
 respectively, $K(\au)_{[i,j]}$ is the 
retraction closure in $K(A)$ of the class of complexes concentrated in degrees $[-j,-i]$.

4. 
Actually, in \cite{bws} both "halves" of $w$ were required to be  additive.  Yet the proof of Proposition 1.3.3(1,2) of ibid.  (that is essentially Proposition \ref{pbw}(\ref{iort}) below)  did not use  this additional assumption, whereas that statement easily yields the additivity of  $\cu_{w\le 0}$ and $\cu_{w\ge 0}$ (since it implies Proposition \ref{pbw}(\ref{iext})). Moreover, Definition 2.4 of \cite{konk} (where weight structures were defined independently from \cite{bws}) did not require  $\cu_{w\le 0}$ and $\cu_{w\ge 0}$ to be additive also.

 5. The orthogonality axiom in Definition \ref{dwstr} immediately yields that $\hw$ is negative in $\cu$.
A certain converse to this statement is given by Corollary \ref{cneg} below.

\end{rema}

Let us recall some basic  properties of weight structures. 
Starting from this moment we will assume that $\cu$ is (a triangulated category) endowed with a (fixed) weight structure $w$.

\begin{pr} \label{pbw}
Let  
 $M,M'\in \obj \cu$, $g\in \cu(M,M')$. 

\begin{enumerate}

\item \label{idual}
The axiomatics of weight structures is self-dual, i.e., for $\du=\cu^{op}$
(so $\obj\du=\obj\cu$) there exists the (opposite)  weight
structure $w^{op}$ for which $\du_{w^{op}\le 0}=\cu_{w\ge 0}$ and
$\du_{w^{op}\ge 0}=\cu_{w\le 0}$.


 \item\label{iort}
 $\cu_{w\ge 0}=(\cu_{w\le -1})^{\perp}$ and $\cu_{w\le 0}={}^{\perp} \cu_{w\ge 1}$.

\item\label{iext} 
 $\cu_{w\le 0}$, $\cu_{w\ge 0}$, and $\cu_{w=0}$
are (additive and) extension-closed. 

\item\label{iextb}
The full subcategory $\cu^+$ (resp. $\cu^-$) of  $\cu$ 
whose objects are the $w$-bounded below (resp. bounded above) objects of $\cu$ is a  retraction-closed triangulated subcategory of $\cu$.

\item\label{iextcub} $ \cu^b$ is the extension-closure of $\cup_{i\in\z}\cu_{w=i}$ in $\cu$. 

\item\label{iextlr} If $w$ is bounded then $ \cu_{w\le 0}$ (resp.  $ \cu_{w\ge 0}$) is the extension-closure of $\cup_{i\le 0}\cu_{w=i}$ (resp. of  $\cup_{i\ge 0}\cu_{w=i}$) in $\cu$.

\item\label{iuni} Let  $v$ be another weight structure for $\cu$; assume   $\cu_{w\le 0}\subset \cu_{v\le 0}$ and $\cu_{w\ge 0}\subset \cu_{v\ge 0}$. Then $w=v$ (i.e., the inclusions are equalities).

\end{enumerate}
\end{pr}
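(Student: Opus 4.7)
The plan is to treat (\ref{iort}) as the key technical input and derive the other parts as consequences. Part (\ref{idual}) is immediate since Definition \ref{dwstr} is manifestly symmetric under the swap $(\cu_{w\le 0},\cu_{w\ge 0})\leftrightarrow (\cu_{w\ge 0},\cu_{w\le 0})$ together with passage to $\cu^{op}$; I would use this freely to dualize arguments. For (\ref{iort}) the inclusion $\cu_{w\ge 0}\subset (\cu_{w\le -1})^{\perp}$ is axiom (iii) shifted by $[-1]$. For the reverse, given $Y\in (\cu_{w\le -1})^{\perp}$, I would apply the weight decomposition axiom (iv) to $Y[1]$ and shift back to produce a distinguished triangle $X\to Y\to Z\to X[1]$ with $X\in\cu_{w\le -1}$ and $Z\in\cu_{w\ge 0}$. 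The map $X\to Y$ vanishes by hypothesis, so the triangle splits and exhibits $Y$ as a retract of $Z$; retraction-closedness (axiom (i)) then places $Y$ in $\cu_{w\ge 0}$. The second equality follows by duality.

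Once (\ref{iort}) is in hand, (\ref{iext}) follows by applying $\cu(X,-)$ for $X\in\cu_{w\le -1}$ (resp.\ $\cu(-,Y)$ for $Y\in\cu_{w\ge 1}$) to an arbitrary triangle $A\to B\to C$; additivity of the classes is then automatic, since direct sums sit in split triangles and the zero object is a retract of every object. For (\ref{iextb}), axiom (i) gives retraction-closedness, while axiom (ii) shows that $A[1]$ is ``more bounded below'' than $A$; thus for any triangle $A\to B\to C\to A[1]$ with $A,B$ bounded below, one chooses an integer $i$ with $A,B\in\cu_{w\ge i}$, notes that $A[1]\in\cu_{w\ge i+1}\subset\cu_{w\ge i}$, and concludes $C\in\cu_{w\ge i}$ via (\ref{iext}); the bounded-above case is dual.

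For (\ref{iextcub}) and (\ref{iextlr}) I would induct on the width $j-i$ of $\cu_{[i,j]}$: given $M\in\cu_{[i,j]}$ with $j>i$, a weight decomposition $X\to M\to Y\to X[1]$ with $X\in\cu_{w\le j-1}$ and $Y\in\cu_{w\ge j}$ automatically satisfies $Y\in\cu_{w=j}$ and $X\in\cu_{[i,j-1]}$ (applying (\ref{iext}) to $\cu_{w\le j}$ and to $\cu_{w\ge i}$ to tighten the bounds), so the induction closes. When $w$ is bounded, every object of $\cu_{w\le 0}$ lies in some $\cu_{[i,0]}$, which reduces the $\cu_{w\le 0}$ half of (\ref{iextlr}) to the same argument; the $\cu_{w\ge 0}$ half is dual.

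For (\ref{iuni}), take $M\in\cu_{v\le 0}$ and a $w$-weight decomposition $X\to M\to Y\to X[1]$. The hypothesis places $X\in\cu_{v\le 0}$ and $Y\in\cu_{w\ge 1}\subset\cu_{v\ge 1}$, so $v$-orthogonality kills $M\to Y$ and splits the triangle; $Y$ becomes a retract of $X[1]\in\cu_{w\le -1}\subset\cu_{w\le 0}$, so $Y\in\cu_{w\le 0}\cap\cu_{w\ge 1}=\{0\}$ (the intersection vanishes by axiom (iii)), forcing $M\cong X\in\cu_{w\le 0}$. The inclusion $\cu_{v\ge 0}\subset\cu_{w\ge 0}$ is dual via (\ref{idual}). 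The main obstacle is (\ref{iort}) itself: one must package the weight decomposition axiom (iv) together with retraction-closedness (i) to upgrade the trivial orthogonality inclusion into an equality, and the same ``split-and-retract'' trick then drives both (\ref{iuni}) and the tightening step in the induction for (\ref{iextcub})--(\ref{iextlr}). Everything beyond (\ref{iort}) is formal bookkeeping.
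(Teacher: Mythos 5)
The paper itself gives no argument for this proposition --- it simply cites \cite{bws} for all seven parts --- so your write-up supplies proofs where the text only gives a reference. Parts (\ref{idual})--(\ref{iextlr}) are handled correctly and by the standard arguments: the split-and-retract upgrade of the orthogonality axiom to the equalities of (\ref{iort}), the long exact sequence of $\cu(X,-)$ and $\cu(-,Y)$ for (\ref{iext}), the nestedness $\cu_{w\ge i+1}\subset\cu_{w\ge i}$ coming from axiom (ii) for (\ref{iextb}), and the induction on $j-i$ in which both components of a weight decomposition of $M\in\cu_{[i,j]}$ get their weights ``tightened'' by extension-closedness for (\ref{iextcub}) and (\ref{iextlr}). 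These all go through as written.

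There is, however, a genuine slip in your proof of (\ref{iuni}). From the triangle $X\to M\xrightarrow{0}Y\to X[1]$ you correctly get the splitting $X[1]\cong Y\oplus M[1]$, so $Y$ is a retract of $X[1]$; but $X\in\cu_{w\le 0}$ gives $X[1]\in\cu_{w\le 0}[1]=\cu_{w\le 1}$, not $X[1]\in\cu_{w\le -1}$ as you assert (the classes $\cu_{w\le i}$ grow, not shrink, as $i$ increases). Hence you only learn $Y\in\cu_{w\le 1}\cap\cu_{w\ge 1}=\cu_{w=1}$, which need not vanish, and the conclusion $M\cong X$ is false in general: weight decompositions are not unique, and for any $Z\in\cu_{w=0}$ the split triangle $M\oplus Z\to M\to Z[1]$ is a perfectly good weight decomposition of $M\in\cu_{w\le 0}$ with $X=M\oplus Z\not\cong M$. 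The repair is immediate and uses only ingredients already in your argument: the same splitting $X[1]\cong Y\oplus M[1]$ shows that $M$ is a retract of $X\in\cu_{w\le 0}$, and axiom (i) (retraction-closedness) then yields $M\in\cu_{w\le 0}$ directly, with no need for $Y$ to vanish. The dual argument for $\cu_{v\ge 0}\subset\cu_{w\ge 0}$ is unaffected. With this one-line correction the whole proposal is sound.
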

\begin{proof}

All of these assertions  were proved  in \cite{bws} 
 (pay attention to Remark \ref{rstws}(3) above!). 

\end{proof}

\begin{rema}\label{restr}
 For $\cu$ endowed with a weight structure $w$ and a triangulated subcategory $\du\subset \cu$ we will say that $w$ {\it restricts} to $\du$ whenever the couple $(\cu_{w\le 0}\cap \obj \du,\cu_{w\ge 0}\cap \obj \du)$ gives a weight structure $w_{\du}$ for $\du$. Part \ref{iort} of our proposition 
easily implies that $w$ restricts to $\du$ if and only if the embedding $\du\to \cu$ is weight-exact with respect to a certain weight structure for $\du$; if this weight structure exists then it is equal to $w_{\du}$  as described by the previous sentence.

\end{rema}

\section{Main results}\label{smain}

This is the central section of the paper. 

In \S\ref{snewgen} we prove our (new) general results on the existence of weight structures.
In \S\ref{spkar} we apply these statements to extending weight structures to {\it  idempotent extensions} of $\cu$.

\subsection
{The general existence of weight structures results}\label{snewgen}


\begin{theo}\label{tex}
Let $\cu'$ be a triangulated category.
Assume given two classes $\cu'_-$ and $\cu'_+$ of objects of $\cu'$  satisfying  the axioms (ii) [Translation Semi-Invariance] and (iii) [Orthogonality]   of Definition \ref{dwstr} (for $\cu_{w\le 0}$ and $ \cu_{w\ge 0}$, respectively). Let us call a $\cu'$-distinguished triangle $X\to M\to Y[1]$ a {\it pre-weight decomposition} of $M$ if $X$ belongs to the 
 envelope  $\cu'_{w'\le 0}$ of  $\cu'_-$ and $Y$ belongs to the 
envelope $\cu'_{w'\ge 0}$ of  $\cu'_+$.

I. Then the following statements are valid.

1.  The class of objects possessing pre-weight decompositions is extension-closed (in $\cu'$). Moreover, if $M$ and $N\in \obj\cu'$ possess pre-weight decompositions then any $\cu'$-extension of $M$ by $N$ possesses a pre-weight decomposition whose components are some extensions of the corresponding components of pre-weight decompositions of $M$ and of $N$, respectively (cf.  \cite[Lemma 1.5.4]{bws}).

2. $\cu'_{w'\le 0}\subset \cu'_{w'\le 0}[1]$ and $\cu'_{w'\ge 0}[1]\subset
\cu'_{w'\ge 0}$.

3. $\cu'_{w'\le 0}\perp \cu'_{w'\ge 0}[1]$.

 4. Let $C'$ be a subclass of $ \obj \cu'$ such that  $\cu'$ is the extension-closure  of  $C'$ and any element of  $C'$ possesses a pre-weight decomposition.
Then  the couple $(\cu'_{w'\le 0},\cu'_{w'\ge 0})$ gives a weight structure $w'$  for $\cu'$.

5. Assume (in addition to the assumptions of the previous assertion) that $C'=\cup_{i\in \z}C[i]$ for some $C\subset \obj \cu'$ and that 
for any $c\in C$ there exists $i_c\in \z$ such that $c[i_c]\in \cu'_{w'\ge 0}$ (resp.  $c[i_c]\in \cu'_{w'\le 0}$). Then this $w'$ is bounded below (resp. bounded above).

II. 
Suppose that a class $C''\subset \obj \cu'$ satisfies the following conditions: $\cu'$ is  densely generated by $C''$ (see \S\ref{snotata}),
 pre-weight decompositions exist for $c[i]$ whenever $c\in C''$ and $i\in \z$, 
and for any $c\in C''$ there exists $i_c\in \z$ such that  
 $c[i_c]\in \cu'_{w'\le 0}$ (cf. assertion I.5).\footnote{See the proof of Corollary \ref{cmot} below for an example of  $\cu'_-$, $\cu'_+$, and $C''$ (for a certain $\cu'$).}  

Then the couple $(\cu'_{w'\le 0},\cu'_{w'\ge 0})$ is a weight structure  for $\cu'$ in this case also; this weight structure $w'$ is bounded below.

Moreover, $w'$ is also bounded above if we assume in addition that for any $c\in C''$ there exists $i'_c\in \z$ such that  $c[i'_c]\in \cu'_{w'\le 0}$.

III. Assume that $N\in \cu'_{w'\le 0}$ is a retract of some $M\in \obj \cu'$ and let $X\to M\to Y[1]\to X[1]$ be a pre-weight decomposition (of $M$). Then the following statements are valid.

1. $N$ is a retract of $X$.

2.  Suppose that $N'\in \cu'_{w'\ge 0}$ is a retract of some $M'\in \obj \cu'$ and let $A'\to M'[1]\to B'[1]\to A'[1]$ be a pre-weight decomposition of $M'[1]$. Then $N'$ is a retract of $B'$.

3. Let $A\to X[1]\to B[1]\to A[1]$ be  a pre-weight decomposition of $X[1]$. Then $B\in  \cu'_{w'\le 0}\cap  \cu'_{w'\ge 0}$. 
Moreover, if $N $ also belongs to $\cu'_{w'\ge 0}$ then $N$ is a retract of $B$.

\end{theo}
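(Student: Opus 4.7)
The plan is to verify the various claims by direct triangulated arguments, leaning on closure properties of envelopes and on orthogonality. For Parts I.2 and I.3, the idea is to use universality of the envelope: the shifted class $\cu'_{w'\le 0}[1]$ is automatically extension- and retraction-closed and contains $\cu'_-$ (by the semi-invariance hypothesis $\cu'_-\subset\cu'_-[1]$), so it contains $\cu'_{w'\le 0}$; dually for $\cu'_{w'\ge 0}$. For orthogonality, fixing $Y\in\cu'_+[1]$ the class $\{X:X\perp Y\}$ is extension- and retraction-closed (the former by cohomologicality of $\cu'(-,Y)$, the latter since retracts of zero are zero) and contains $\cu'_-$, hence contains $\cu'_{w'\le 0}$; running the symmetric argument in the second variable then forces $\cu'_{w'\le 0}\perp\cu'_{w'\ge 0}[1]$. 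Part I.1 is the octahedral construction of \cite[Lemma 1.5.4]{bws}: given an extension $L$ of $M$ by $N$ and pre-weight decompositions of both, the connecting morphism is lifted through a piece of the decomposition using orthogonality (I.3), and the octahedral axiom assembles the components of a pre-weight decomposition of $L$ as extensions of the corresponding components for $M$ and $N$.

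Parts I.4 and I.5 then follow almost formally. For I.4, axioms (i)--(iii) of Definition \ref{dwstr} are retraction-closedness (built into the envelope), I.2, and I.3; axiom (iv) holds on all of $\cu'$ because the class of objects admitting pre-weight decompositions contains $C'$, is extension-closed by I.1, and $\cu'$ is the extension-closure of $C'$. For I.5, the class $\cup_{i\in\z}\cu'_{w'\ge i}$ (resp.\ $\cup_{i\in\z}\cu'_{w'\le i}$) is itself extension- and retraction-closed, so the boundedness condition on $C$ propagates to all of $\cu'$.

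Part II is the main technical step and the central obstacle. The plan is to first apply Part I to $C'=\cup_{j\in\z}C''[j]$ to install a (one-sided bounded) weight structure on the strongly generated subcategory $\cu'_0\subset\cu'$, which is dense in $\cu'$. To produce a pre-weight decomposition of an arbitrary $N\in\obj\cu'$, density gives $M\in\obj\cu'_0$ with $N$ a retract of $M$, and the one-sided boundedness on $\cu'_0$ (together with retraction-closure of the half-spaces $\cu'_{w'\ge i}$) ensures that for $n$ large enough $N[2n]$ lies in $\cu'_{w'\ge 1}$ and therefore admits the tautological pre-weight decomposition $0\to N[2n]\to N[2n]$. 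Lemma \ref{lretr} then places $N$ in the extension-closure of $\{N[2n]\}\cup\{M[i]:0\le i<2n\}$; each $M[i]$ lies in $\cu'_0$ and admits a pre-weight decomposition by Part I applied to $\cu'_0$, so iterated application of I.1 supplies one for $N$. The ``moreover'' two-sided boundedness case runs the dual shift argument.

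Part III consists of direct orthogonality calculations. For III.1, the composition $N\to M\to Y[1]$ vanishes since $N\in\cu'_{w'\le 0}$ and $Y[1]\in\cu'_{w'\ge 1}$, so the inclusion $i\colon N\to M$ factors as $N\xrightarrow{i'}X\xrightarrow{\alpha}M$, and the map $p\circ\alpha\colon X\to N$ satisfies $(p\circ\alpha)\circ i'=\id_N$. III.2 is the formal dual: shift $N'$ by $[1]$, use $A'\perp N'[1]$ to factor $p'[1]$ through $B'[1]$, and read off the retraction. For III.3, shifting the triangle $A\to X[1]\to B[1]$ by $[-1]$ gives $A[-1]\to X\to B$ with both $A[-1]$ and $X$ in $\cu'_{w'\le 0}$, so the envelope's extension-closedness puts $B\in\cu'_{w'\le 0}$; combining with $B\in\cu'_{w'\ge 0}$ from the hypothesis gives the first claim, and the ``moreover'' follows by applying III.2 with $M'=X$, since $N$ is a retract of $X$ by III.1.
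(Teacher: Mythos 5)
Your argument is correct and follows essentially the same route as the paper's proof: the octahedral extension argument of \cite[Lemma 1.5.4]{bws} for I.1 and hence I.4, Lemma \ref{lretr} combined with the retraction-closedness of the half-spaces to reduce Part II to Part I (the paper phrases this as showing that $\cu'$ is the extension-closure of $\cup_{i}C''[i]\cup \cu'_{w'\ge 0}[1]$, which is the same computation), and the identical orthogonality factorizations in Part III. The only difference is that you spell out the "obvious" closure arguments for I.2--I.3 and work object-by-object in II rather than through assertion I.4, which changes nothing of substance.
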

\begin{proof}
I.1. See Remark 1.5.5(1) of \cite{bws}.

2,3. Obvious from the corresponding properties of   $(\cu'_-,\cu'_+)$.

4. We use an easy and more or less standard argument; it was first applied to weight structures in the proof of \cite[Theorem 4.3.2(II.1)]{bws}.

Certainly $\cu'_{w'\le 0}$ and $\cu'_{w'\ge 0}$ are retraction-closed in $\cu'$. 
Axioms (ii) and (iii) of weight structures are fulfilled for  $(\cu'_{w'\le 0},\cu'_{w'\ge 0})$ according to the previous assertions.

We only have to verify the existence of weight decompositions (for all objects of $\cu'$). This  statement is an immediate consequence of assertion I.1. 


5. 
Immediate from Proposition \ref{pbw}(\ref{iextb}).

II. We take $C'=\cup_{i\in \z}C''[i]\cup \cu'_{w\ge 0}[1]$. 
Certainly, all elements of $C'$ possess pre-weight decompositions. According to assertion I.4, the couple $(\cu'_{w'\le 0},\cu'_{w'\ge 0})$ gives a weight structure for $\cu'$ if  $ \cu'$  equals the 
extension-closure of 
$C'$; 
so we verify the latter fact.

Denote by $\cu''$ the triangulated subcategory of $\cu'$ strongly generated by $C''$. According to assertion I.1,  any $c\in \obj \cu''$  possesses a pre-weight decomposition, and there (also) exists  $i_c\in \z$ such that  $c[i_c]\in \cu'_{w'\le 0}$. Now,  any object of $\cu'$ is a retract of an  object of $\cu''$; hence it also satisfies the latter property. Applying Lemma \ref{lretr}  we easily deduce that $\cu'$ equals the envelope of $\obj \cu''\cup  \cu'_{w\ge 0}[1]$; hence it also equals the $\cu'$-envelope of $C'$.

Lastly, the  boundedness below of $w'$ along with the "moreover" part of the assertion follows immediately from Proposition  \ref{pbw}(\ref{iextb}). 

III.1. 
Recall that $N$ being a retract of $M$ means that $\id_N$ can be factored through $M$.
Next, we have $N\perp Y[1]$; hence the corresponding morphism from $N$ into $M$ can be factored through $X$. 

2. This assertion can be easily seen to be 
the categorical dual of the previous one (cf. Proposition \ref{pbw}(\ref{idual})).

3. Recall that $\cu'_{w'\le 0}$ 
 is extension-closed in $\cu'$. Hence the distinguished triangle $X\to B\to A$ gives $B\in \cu'_{w'\le 0}$.
Next, $B\in \cu'_{w'\ge 0}$ by the definition of a pre-weight decomposition.

Lastly, $N$ is a retract of $X$ according to assertion III.1; hence the "moreover" part of this assertion follows immediately from the previous assertion.

\end{proof}

Now we describe an easy application of  our theorem (along with previous results).

\begin{coro}\label{cneg}

Let $\bu$ be an (additive) negative subcategory (see Definition \ref{dwso}(\ref{id6})) of a triangulated category $\cu'$ such that 
$\cu'$ is densely generated by $\obj \bu$. 

 Then the following statements are valid.

1. The envelopes $\cu'_{w'\le 0}$ and $\cu'_{w'\ge 0}$ of the classes $\cup_{i\le 0} \obj \bu[i]$ and $\cup_{i\ge 0} \obj \bu[i]$, respectively, give a weight structure on $\cu'$. 

2. The heart of this weight structure $w'$ equals  $\kar_{\cu'}(\bu)$. 

3. $\obj \kar_{\cu'}(\bu)$  strongly generates $\cu'$. Moreover, $\cu'_{w'\le 0}$ (resp.  $\cu'_{w'\ge 0}$) is the extension-closure of $\cup_{i\le 0}\obj \kar_{\cu'}(\bu)[i]$ (resp. of $\cup_{i\ge 0}\obj \kar_{\cu'}(\bu)[i]$). 

4. $w'$ is the only weight structure for $\cu'$ whose heart contains $\bu$.

\end{coro}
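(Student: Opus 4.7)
My plan is to apply Theorem \ref{tex}(II) with $\cu'_-=\cup_{i\le 0}\obj\bu[i]$, $\cu'_+=\cup_{i\ge 0}\obj\bu[i]$, and $C''=\obj\bu$. Translation semi-invariance of $(\cu'_-,\cu'_+)$ is built into the definitions, and the orthogonality $\cu'_-\perp \cu'_+[1]$ is exactly the negativity of $\bu$: for $b,b'\in \obj\bu$ and integers $i\le 0\le j$ one has $\cu'(b[i], b'[j+1])=\cu'(b, b'[j+1-i])=\ns$ because $j+1-i\ge 1$. Pre-weight decompositions of the shifts $c[i]$, $c\in\obj\bu$, are trivial in each direction (use $c[i]\to c[i]\to 0$ when $i\le 0$, or $0\to c[i]\to c[i]$ when $i\ge 0$), dense generation of $\cu'$ by $\obj\bu$ is given, and the choice $i_c=i'_c=0$ works since $\bu\subset \cu'_{w'\le 0}\cap \cu'_{w'\ge 0}$. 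Theorem \ref{tex}(II) together with its ``moreover'' clause then produces a bounded weight structure $w'$ on $\cu'$, settling assertion 1.

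Assertion 4 follows at once from Proposition \ref{pbw}(\ref{iuni}): for any weight structure $v$ on $\cu'$ with $\bu\subset \cu'_{v=0}$, extension-closedness of $\cu'_{v\le 0}$ (Proposition \ref{pbw}(\ref{iext})) and its retraction-closedness force the envelope $\cu'_{w'\le 0}$ to lie inside $\cu'_{v\le 0}$, and dually for the $\ge 0$ parts. Once assertion 2 is known, assertion 3 will follow from boundedness of $w'$: Proposition \ref{pbw}(\ref{iextcub}) writes $\cu'=(\cu')^b$ as the extension-closure of $\cup_{i\in\z}\obj\kar_{\cu'}(\bu)[i]$, giving strong generation, while Proposition \ref{pbw}(\ref{iextlr}) yields the one-sided descriptions of $\cu'_{w'\le 0}$ and $\cu'_{w'\ge 0}$.

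The main obstacle is the inclusion $\cu'_{w'=0}\subset \obj\kar_{\cu'}(\bu)$ in assertion 2 (its reverse being immediate from $\bu\subset \cu'_{w'=0}$ and retraction-closedness of the heart). Fix $N\in \cu'_{w'=0}$; by dense generation, $N$ is a retract of some $M$ in the extension-closure of $\cup_i\obj\bu[i]$. Iterating Theorem \ref{tex}(I.1) from the trivial pre-weight decompositions of the individual $b[i]$'s produces a pre-weight decomposition $X\to M\to Y[1]$ in which $X$ actually lies in the extension-closure of $\cup_{i\le 0}\obj\bu[i]$; Theorem \ref{tex}(III.1) then makes $N$ a retract of $X$. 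Applying the same inductive construction to $X[1]$, which lies in the extension-closure of $\cup_{j\le 1}\obj\bu[j]$, furnishes a pre-weight decomposition $A\to X[1]\to B[1]$ whose $+$ part $B[1]$ is built only from the sole positive-weight layer $\obj\bu[1]$; hence $B$ lies in the extension-closure of $\obj\bu$, which by the negativity and additivity of $\bu$ (every map in $\cu'(b',b[1])$ vanishes, so such extensions split as direct sums in $\bu$) collapses to the class of objects isomorphic to those of $\bu$, giving $B\in\obj\kar_{\cu'}(\bu)$. Theorem \ref{tex}(III.3) then exhibits $N$ as a retract of this $B$, so $N\in\obj\kar_{\cu'}(\bu)$. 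The delicate step is tracking through the iterative construction of Theorem \ref{tex}(I.1) that the positive-weight part of the pre-weight decomposition of $X[1]$ cannot pick up any shift of $\bu$ below degree $1$.
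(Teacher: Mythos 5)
Your proposal is correct and follows essentially the same route as the paper: Theorem \ref{tex}(II) with the same choices of $\cu'_-$, $\cu'_+$, $C''$ for assertion 1; the ``moreover'' clause of Theorem \ref{tex}(I.1) to build a pre-weight decomposition of $M$ whose negative part $X$ lies in the extension-closure of $\cup_{i\le 0}\obj\bu[i]$ and whose second-stage positive part collapses (via negativity and additivity of $\bu$) into $\obj\bu$, followed by Theorem \ref{tex}(III.1,\,III.3) for assertion 2; and Proposition \ref{pbw}(\ref{iextcub},\ref{iextlr},\ref{iuni}) for assertions 3 and 4. The only cosmetic differences are that the paper routes the assertion-2 argument through the subcategory strongly generated by $\bu$ rather than iterating I.1 directly in $\cu'$, and that your assertion-4 argument appeals to the envelope description directly instead of to assertion 3; the ``delicate step'' you flag is exactly what the ``moreover'' part of I.1 guarantees, so there is no gap.
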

\begin{proof}

1. It suffices to note that the classes $C'_-=\cup_{i\le 0}\obj \bu[i]$, $C'_+=\cup_{i\ge 0}\obj \bu[i]$, and $C''=\obj \bu$   satisfy the conditions of  Theorem \ref{tex}(II). Indeed, $C'_-\perp C'_+[1]$ since $B$ is negative, and all the other conditions are obvious.

2. Denote  by $\cu$ the triangulated subcategory of $\cu'$ that is strongly generated by $\bu$. Then part I.4 of our theorem immediately implies that $w'$ restricts to $\cu$ (in the sense of Remark \ref{restr}). Denote the corresponding weight structure on $\cu$ by $w$.\footnote{Its existence is precisely 
 Theorem 4.3.2(II.1) of \cite{bws}.} 

Now,  applying (the "moreover" statement in) part I.1 of our theorem to $(\cu,w)$ we obtain that for any $M\in \obj \cu$ there exists a choice of $X=w_{\le 0}M$ belonging to the extension-closure of $\cup_{i\le 0}\obj (\bu[i])$. Applying  the same 
part of the theorem  to  $X$ we obtain the existence of a choice of  $w_{\ge 0}X$ belonging to $\obj \bu$.

Next, any object $N$ of $\cu'$ is a retract of some object $M$ of $\cu$. 
Taking an arbitrary $N\in \cu'_{w'=0}$ and considering  the corresponding $w_{\ge 0}X$ as described above we obtain that $N$ is a retract of $w_{\ge 0}X\in \obj\bu$ according to part III.3 of the theorem. Hence  $\hw'$ equals $\kar_{\cu'}(\bu)$.

3. Since $w'$ is bounded, $\cu'_{w'=0}$ strongly generates $\cu'$ according to Proposition \ref{pbw}(\ref{iextcub}). Combining this with  assertion 2 we obtain the first part of our assertion, whereas  Proposition \ref{pbw}(\ref{iextlr}) gives the second part. 

4. Let $v$ be a weight structure for $\cu'$ whose heart $\underline{Hv}$ contains $\bu$. Then $\underline{Hv}$ certainly contains $\kar_{\cu'}(\bu)$.  Now, the classes $\cu'_{v\le 0}$ and  $\cu'_{v\ge 0}$ contain the extension-closures of $\cup_{i\le 0}(\cu'_{v=0}[i])$ and of  $\cup_{i\ge 0}(\cu'_{v=0}[i])$, respectively. Applying the previous assertion we obtain $\cu'_{w'\le 0}\subset \cu'_{v\le 0}$ and $\cu'_{w'\ge 0}\subset \cu_{v\ge 0}$. Thus our uniqueness assertion follows from Proposition \ref{pbw}(\ref{iuni}).
\end{proof}

\begin{rema}\label{rneg}
For $\cu'$ as above being idempotent  complete our  corollary gives   Proposition 5.2.2 of \cite{bws}.
 So we obtain a new proof of loc.\ cit.\ that only relies on \S1 of ibid. (and so, it is somewhat easier than the original one). 

The general case of Corollary \ref{cneg} 
is completely new.

\end{rema}

\subsection{On extending weight structures to  idempotent extensions}\label{spkar}


\begin{defi}\label{dpkar}
1. We will call a triangulated category $\cu'$ an {\it idempotent extension} of $\cu$ if  it contains $\cu$ and there exists a fully faithful exact functor  $\cu'\to \kar(\cu)$.\footnote{The latter assumption is certainly equivalent to any of the  following conditions: any object of $\cu'$ is a retract of some object of $\cu$; $\cu$ is dense (see \S\ref{snotata})  in $\cu'$.}\

2. We will say that a weight structure $w$  {\it extends} to an idempotent extension $\cu'$ of $\cu$ 
 whenever there  exists a weight structure 
 $w'$ for $\cu'$ such that the embedding $\cu\to \cu'$ is weight-exact.  In this case we will call $w'$ an {\it extension} of $w$. 

3. We will say that a triangulated category $\cu'$ endowed with a weight structure $w'$ is {\it weight-Karoubian} if $\hw'$ is idempotent  complete.

4. We will call  a weight-Karoubian category $(\cu',w')$ a   {\it weight-Karoubian extension} of $(\cu,w)$ if $\cu'$ is an  idempotent extension of $\cu$ and $w'$ is an extension of $w$ to $\cu'$.

5.  The (triangulated) category  $\lan \obj \cu\cup \obj \kar(\hw)\ra_{\kar(\cu)}$ will be denoted by $\wkar(\cu)$, and 
the category $\lan \obj \kar(\cu^-)\cup \obj \kar(\cu^+) \ra_{\kar(\cu)}$  (see Proposition \ref{pbw}(\ref{iextb}))  will be denoted  by $\karw(\cu)$. 

\end{defi}

Now we 
study those  idempotent extensions   of $\cu$ such that $w$ extends to them.

\begin{theo}\label{tpkar}
Let $\cu'$ be an  idempotent extension  of $\cu$.

I.1. Assume that $w'$ is an extension of $w$ to $\cu'$.
Then $\cu'_{w\le 0}$ (resp.  $\cu'_{w'\ge 0}$,   $\cu'_{w'= 0}$)  is the retraction-closure of  $\cu_{w\le 0}$ (resp.  $\cu_{w\ge 0}$,   $\cu_{w= 0}$) in $\cu'$. 

2.  An extension of $w$ to $\cu'$ exists if and only if  $\cu'$ is strongly generated by $\obj \cu\cup C_1\cup C_2$ for some class $C_1$  of  retracts of objects of $\cu^+$ and some class $C_2$ of  $\cu'$-retracts of objects of $\cu^-$.

II.1.  An extension $w'$ of $w$ is bounded  below (resp. above) if and only if $w$ is.

2. Assume that $w$ is either bounded below or bounded above. Then $w$ extends to any  idempotent extension of $\cu$.

III.1. 
The categories $\wkar(\cu)\subset \karw(\cu)$ when equipped with the unique extensions of $w$ to them are weight-Karoubian extensions of $\cu$.

2. If $\cu'$ is a weight-Karoubian extension of $\cu$ then $\hw'$ is equivalent to the  idempotent  completion of $\hw$.

3. If $w$ extends to $\cu'$ then there exists a   fully faithful exact functor from $\cu'$ into $\karw(\cu)$; this functor is weight-exact with respect to the corresponding ({\it extended}) weight structures.

4. If $\cu'$ is weight-Karoubian then there exists a fully faithful weight-exact functor $\wkar(\cu)\to \cu'$.

\end{theo}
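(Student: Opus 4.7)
The plan is to deduce every existence claim from Theorem~\ref{tex} and to use part I.1 of the present theorem as a structural tool for the remaining items. I would begin by proving I.1. Given $N \in \cu'_{w'\le 0}$, density of $\cu$ in $\cu'$ produces some $M \in \obj\cu$ of which $N$ is a retract; a weight decomposition of $M$ in $\cu$ is automatically a pre-weight decomposition in $\cu'$ with pre-classes $\cu'_-=\cu_{w\le 0}$, $\cu'_+=\cu_{w\ge 0}$ (axioms (ii) and (iii) being inherited from $\cu$), so Theorem~\ref{tex}.III.1 exhibits $N$ as a retract of the ``$\le 0$'' vertex $X \in \cu_{w\le 0}$. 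The reverse inclusion is immediate, giving $\cu'_{w'\le 0} = \kar_{\cu'}(\cu_{w\le 0})$. The statement for $\cu'_{w'\ge 0}$ is dual (via Theorem~\ref{tex}.III.2), and the heart case follows by applying Theorem~\ref{tex}.III.3: a second pre-weight decomposition of $X[1]$, taken from $\cu$, has middle vertex $B \in \cu_{w=0}$, and the ``moreover'' of III.3 realises $N$ as a retract of $B$.

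For I.2 the ``only if'' direction is immediate from I.1: weight decompositions in $\cu'$ together with $\cu'_{w'\le 0} \subset \kar_{\cu'}(\cu^-)$ and $\cu'_{w'\ge 0} \subset \kar_{\cu'}(\cu^+)$ show that $\cu'$ is strongly generated by $\obj\cu$ and these two retract classes. For the ``if'' direction I would apply Theorem~\ref{tex}.I.4 with $\cu'_-=\cu_{w\le 0}$, $\cu'_+=\cu_{w\ge 0}$, and generating class $C' = \bigcup_{i\in\z}(\obj\cu \cup C_1 \cup C_2)[i]$; the extension-closure condition on $C'$ is the strong-generation hypothesis. Pre-weight decompositions for shifts of $\cu$-objects are inherited from $\cu$. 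Constructing them for $c[i]$ with $c \in C_1$ is the main technical point: writing $c$ as a retract of $M \in \cu_{w\ge -n}$ gives a splitting $M \cong c \oplus c'$ in $\cu'$, and I would apply the octahedral axiom to the composite $w_{\le 0}M \to M \to c$ (using the splitting projection) to produce a candidate triangle whose cone fits into $w_{\ge 1}M \to Z \to c'[1]$; verifying that $Z$ lies in the envelope of $\cu_{w\ge 0}$ is the crux and likely requires induction on $n$, exploiting the observation that for $n=0$ the retract already sits in $\kar_{\cu'}(\cu_{w\ge 0})$ and the trivial triangle works. The case $c \in C_2$ is dual.

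Parts II and III assemble from I. II.1 is a direct consequence of I.1: any $c \in \obj\cu \subset \obj\cu'$ that is $w'$-bounded below is a retract of some $M \in \cu_{w\ge -n}$, and since $\cu_{w\ge -n}$ is retraction-closed in $\cu$, $c \in \cu_{w\ge -n}$; the converse is immediate from the definition of retraction-closure. II.2 follows from I.2 by taking $C_1 = \obj\cu'$, $C_2 = \emptyset$ when $w$ is bounded below (so $\cu = \cu^+$), or dually. For III.1, both $\wkar(\cu)$ and $\karw(\cu)$ are strongly generated as demanded by I.2 (for $\wkar$ take $C_1=C_2=\obj\kar(\hw)$, noting $\hw \subset \cu^+ \cap \cu^-$; for $\karw$ take $C_1=\obj\kar(\cu^+)$, $C_2=\obj\kar(\cu^-)$), so $w$ extends uniquely to each; the heart is $\kar_{\cu'}(\hw)$ by I.1, and both ambient categories contain $\kar(\hw)$ by construction, forcing the heart to equal $\kar(\hw)$, which is idempotent complete. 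III.2 is the parallel statement: in a weight-Karoubian extension the heart is $\kar_{\cu'}(\hw)$, which is idempotent complete and sandwiched between $\hw$ and $\kar(\hw)$, hence equals $\kar(\hw)$.

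For III.3, the fully faithful exact embedding $\cu' \hookrightarrow \kar(\cu)$ comes with the definition of idempotent extension; a weight decomposition of any $N \in \obj\cu'$ has outer vertices in $\kar_{\cu'}(\cu_{w\le 0}) \subset \obj\kar(\cu^-)$ and $\kar_{\cu'}(\cu_{w\ge 0}) \subset \obj\kar(\cu^+)$ by I.1, placing the image inside $\karw(\cu)$; weight-exactness is then automatic. For III.4, a weight-Karoubian $\cu' \subset \kar(\cu)$ contains $\cu$ and, by III.2, also $\kar(\hw)$, so it contains the triangulated subcategory $\wkar(\cu)=\lan \obj\cu \cup \obj\kar(\hw)\ra_{\kar(\cu)}$. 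The principal obstacle throughout will be the pre-weight decomposition construction for retracts in the ``if'' direction of I.2: a weight decomposition of $M$ in $\cu$ does not restrict naively to a retract $c$ living only in $\cu'$, since the splitting idempotent need not be compatible with the chosen weight truncations of $M$, and the octahedral/inductive bookkeeping is the most delicate part of the plan; once I.1 and I.2 are secured, the remainder of the theorem is essentially formal.
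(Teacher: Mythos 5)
Most of your plan runs parallel to the paper's argument: I.1 via Theorem \ref{tex}(III) applied to a $w$-decomposition of an object of $\cu$ dominating the given retract, the ``only if'' half of I.2 from weight decompositions plus I.1, and parts II and III assembled formally from I.1 and I.2 exactly as in the paper. The problem is that the one step you yourself flag as ``the crux'' --- producing pre-weight decompositions for shifts of objects $c\in C_1$, i.e.\ for $\cu'$-retracts $N$ of objects $M\in\cu_{w\ge j}$ with $j\le 0$ --- is not actually carried out, and the route you sketch does not close. Your octahedron applied to $w_{\le 0}M\to M\to c$ yields a cone $Z$ sitting in a triangle $w_{\ge 1}M\to Z\to c'[1]$, so to place $Z$ in the envelope of $\cu_{w\ge 1}$ you must first place $c'[1]$ there; but $c'$ is the complementary retract of the same $M$, so you have reduced the claim for $c$ to a claim of at least the same strength for $c'$, with no visible decrease in any parameter. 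The ``induction on $n$'' you invoke has no base-case-reaching mechanism, since shifting the weight decomposition of $M$ does not change which summand of $M$ you are trying to control.

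The paper resolves this with Lemma \ref{lretr} rather than the octahedron: choose $n\ge 0$ with $j+2n>0$; then $N[2n]$ is a retract of $M[2n]\in\cu_{w\ge j+2n}\subset\cu_{w\ge 1}$, hence lies in $\cu'_{w'\ge 1}$ and trivially admits a pre-weight decomposition ($0\to N[2n]\to N[2n]$), while each $M[i]$ for $0\le i<2n$ is an object of $\cu$ and so admits one as well. Lemma \ref{lretr} exhibits $N$ as an iterated extension of $N[2n]$ and these $M[i]$, and Theorem \ref{tex}(I.1) (extension-closedness of the class of objects possessing pre-weight decompositions) then gives the pre-weight decomposition of $N$. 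This shifting-and-rotating device is the missing idea; once you substitute it for the octahedral sketch, the rest of your write-up goes through as stated.
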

\begin{proof}
I.1. Since $\cu'_{w'\le 0}$, $\cu'_{w'\ge 0}$, and  $\cu'_{w'= 0}$ are retraction-closed  (in $\cu'$), these classes do contain the retraction closures in question.

The proof of the converse implication is 
similar to 
 the proof Corollary \ref{cneg}(2). 
Let an element $N$ of $ \cu'_{w'\ge 0}$ (resp. of  $\cu'_{w'\le 0}$,  $\cu'_{w'= 0}$) be a retract of $M\in \obj \cu$.
Note now that any   $w$-decomposition of  an object of  $\cu$ is also a $w'$-decomposition. Applying Theorem \ref{tex}(III)  
we obtain that $N$ is a retract of any choice of $X=w_{\le 0}M$ (resp. of $w_{\ge 0}M$,  $w_{\ge 0}X$), whereas these three objects belong to $\cu_{w\le 0}$, $\cu_{w\ge 0}$, and  $\cu_{w= 0}$, respectively.

2. Assume that an extension of $w$ to $\cu'$ exists. Then  any object $M$ of $\cu'$ possesses a weight decomposition with respect to $w'$.  Applying assertion I.1, we obtain that this triangle gives a presentation  of $M$ as an extension of an object $M_1$ of $ \kar_{\cu'}(\cu^+)$ by an object $M_2$ of $\kar_{\cu'}(\cu^-)$.  Thus 
one can take $C_1$ to be the class of all $M_1$ obtained this way, and $C_2$ to be the class of all $M_2$.

To verify the converse implication, for $\cu'$ being strongly generated by $\obj \cu\cup C_1\cup C_2$  we should check that the $\cu'$-retraction closures  $\cu'_{w\ge 0}$ and  $\cu'_{w'\le 0}$ of the classes   $\cu_{w\ge 0}$ and  $\cu_{w\le 0}$, respectively, give a weight structure for $\cu'$. For this purpose we apply Theorem \ref{tex}(I.4) for  $\cu'_-= \cu'_{w'\le 0}$ and $\cu'_+=\cu'_{w'\le 0}$. According to this theorem, it suffices to verify that any element of   $\obj \cu\cup (\cup_{i\in \z} C_1[i]) \cup (\cup_{i\in \z} C_2[i]) $ possesses a pre-weight decomposition. 
Certainly, any object of $\cu$ possesses a  pre-weight decomposition inside $\cu$. Hence it suffices to verify the existence of pre-weight decompositions 
for elements of $\cup_{i\in \z} C_1[i]$ (since dualization would yield the same assertion for $\cup_{i\in \z} C_2[i]$; cf. Proposition \ref{pbw}(\ref{idual})).

Thus it suffices to verify the following: for any $j\in \z$ and all pairs $(M,N)$, where $M\in \cu_{w\ge j}$ and $N$ is a $\cu'$-retract of $M$, there exists a pre-weight decomposition of $N$.
This fact is certainly true if $j>0$. In the general case we choose $n\ge 0$ such that $j+2n>0$ and 
recall that $N$ belongs to the extension-closure of $\{N[2n]\}\cup\{M[i],\ 0\le  i< 2n\}$ (see Lemma \ref{lretr}). It remains to apply Theorem \ref{tex}(I.1).

II.1. Certainly, if all objects of $\cu'$ are $w'$-bounded below (resp. above) then all objects of $\cu$ are  $w'$-bounded below (resp. above); hence 
they are $w$-bounded below (resp. above) also.

The converse implication is  immediate from  assertion I.1. 

2. Immediate from 
assertion I.2.

III.1. The weight structure $w$ extends to $\wkar(\cu)$ and to $\karw(\cu)$ according to  assertion I.2; these categories are weight-Karoubian  according to  assertion I.1. Lastly, the existence of weight decompositions in $\cu$ certainly implies that $\wkar(\cu)\subset \karw(\cu)$.

2. Immediate from assertion I.1.

3. The existence of a   fully faithful exact functor  $F:\cu'\to \karw(\cu)$ is immediate from assertion I.2. The functor $F$ is weight-exact according to assertion I.1.

4. 
Certainly, if a  weight-Karoubian extension $\cu'$ of $\cu$  is a strict subcategory of $\kar(\cu)$ then it contains $\kar_{\kar(\cu)}\hw$; this implies the  existence of a full embedding  $\wkar(\cu)\to \cu'$. 
This functor is weight-exact according to assertion I.1. 

\end{proof}

\begin{rema}\label{rpkar}
1. In particular, there exists at most one extension of $w$ to $\cu'$ (so, it may be called "the" extension of $w$ to $\cu'$); its heart can be embedded into the  idempotent  completion of $\hw$.

2. So, any $(\cu,w)$ possesses a weight-Karoubian extension. This fact is important for \cite{bkw}.

3. Certainly, any  idempotent  complete triangulated category with a weight structure is weight-Karoubian, but the converse 
 fails (for unbounded weight structures).  In particular, the categories $\wkar(\cu)$ and $\karw(\cu)$ can be distinct from $\kar(\cu)$
 (see the example in \S\ref{ssosn}). 

4. Obviously, part I.2 of the theorem can be reformulated as follows: $w$ extends to $\cu'$ if and only  if $\cu'$ is strongly generated by $\obj \kar_{\cu'}\cu^-\cup \obj \kar_{\cu'}\cu^+$. 

Assume now that the category  $\cu$ is essentially small; then its  idempotent  completion $\du=\kar(\cu)$ is also essentially small. Next, the categories $\cu'$, $\lan\obj \cu\cup \obj \kar_{\cu'}\cu^- \ra_{\cu'}$,	and $\lan\obj \cu\cup \obj \kar_{\cu'}\cu^+ \ra_{\cu'}$ may be assumed to be dense   in $\du$ for any idempotent extension $\cu'$ of $\cu$. 
	
	Now recall that the Grothendieck group $K_0(\du)$ is defined as follows: it is the abelian group  whose
generators are isomorphism classes of objects of $\du$, and such that for any $\du$-distinguished triangle $X\to Y\to Z$ the relation $[Y]=[X]+[Z]$ on the classes is fulfilled. Furthermore, sending a subgroup $H$ of $K_0(\du)$ into the full subcategory of $\du$ whose objects are characterized by the condition $[M]\in H$ one obtains a one-to-one correspondence between the set of subgroups of   $K_0(\du)$  and  the set of  (all) dense subcategories of $\du$; see Theorem 2.1 of \cite{thom}.

	Thus the subcategories $\lan\obj \cu\cup \obj \kar(\cu^-) \ra_{\du}$ and $\lan\obj \cu\cup \obj \kar(\cu^+) \ra_{\du}$ of $\du$ correspond to certain subgroups $K^-$ and $K^+$ of $K_0(\du)$, and one can easily  check that $w$ extends to $\cu'$ if and only  if 
	for the group $G=\imm (K_0(\cu')\to K_0(\du))$ we have $(G\cap K^-)+(G\cap K^+)=G$. 
	
The authors suspect that this criterion is rather  difficult to apply in general. Note however that these 	Grothendieck group observations have inspired the 
  example described in \S\ref{ssspkar} below.

\end{rema}

\section{Some (counter)examples}\label{sexamples}

By Theorem \ref{tpkar}(II.2), any bounded above (or bounded below) weight structure $w$ on $\cu$
 extends to any idempotent extension of $\cu$. 
  In this section we demonstrate that this statement (along with two of its natural implications) 	fails for a general $w$. 

\subsection
{The category ${\protect\operatorname{Kar}}^w_{\max}({\protect\underline{C}})$  may be strictly smaller than $ {\protect \operatorname{Kar}}({\protect\underline{C}})$}
\label{ssosn}

Certainly, if $\cu^+$ and $\cu^-$ are  idempotent  complete then $\cu\cong \karw(\cu)$. Now we construct an example of this situation with $\cu$ not being  idempotent  complete; it certainly follows that $\karw(\cu)$ is not equivalent to $\kar(\cu)$ in this situation.

Consider the unbounded homotopy category $\cu = K(\au)$ (note that $K(\au)$ doesn't have infinite coproducts  if $\au$ does not, and in particular, is not necessarily idempotent complete), where $\au$ is an additive category with $K_{-1}(\au) \neq 0$; here we endow $\au$ with the trivial 
structure of an exact category and define the groups $K_{*}(\au) $ using Definition 8 of \cite{schlicht}.
Note that for this purpose one can take $\au$ to be the category of finitely generated projective modules over a (commutative) 
  ring $R$ such that $K_{-1}(R)\neq 0$ (see Theorem 5 of ibid.); 
 rings satisfying this condition are well known to exist.  

 Indeed, one can take the affine nodal curve $C= \operatorname{Spec}(R)$ for $R=\mathbb{Q}[x,y]/(y^2 - x^3 - x^2)$. 
 Then the (cartesian) 
  abstract blow-up square (see \cite[\S0]{cdhkth})
 $$\xymatrix{
\operatorname{Spec}(\mathbb{Q}[t]/(t-1)(t+1)) \ar[d] \ar[r]& \operatorname{Spec}(\mathbb{Q}[x,y]/(x,y)) \ar[d]^{}\\
\operatorname{Spec}(\mathbb{Q}[t]) \ar[r]^{x\mapsto t^2-1, y\mapsto t^3-t}              & C}$$ 
yields the exact sequence of cdh-cohomology 
$$
0\to \operatorname{H}_{cdh}^0(C,\mathbb{Z}) \to \operatorname{H}_{cdh}^0(pt,\mathbb{Z})\oplus \operatorname{H}_{cdh}^0(\mathbb{A}^1,\mathbb{Z}) \to \operatorname{H}_{cdh}^0(pt \sqcup pt,\mathbb{Z}) \to \operatorname{H}_{cdh}^1(C,\mathbb{Z}) \to 0
$$
Note that 
 there is an isomorphism of functors $\operatorname{H}_{cdh}^0(-,\mathbb{Z}) \cong  \mathbb{Z}^{\operatorname{comp}(-)} $ 
 where $\operatorname{comp}(X)$ is the set of connected components of a $k$-variety $X$. 
Hence by \cite[Theorem 0.2]{cdhkth} we have $K_{-1}(R)=K_{-1}(C) \cong \operatorname{H}_{cdh}^1(C,\mathbb{Z}) \cong \mathbb{Z}$. 

Next, Corollary 6 of  \cite{schlicht} 
  implies that in this case 
$K(\au)$ is not  idempotent  complete.\footnote{
	Theorem 5, Corollary 6, and Definition 8 in the published version of this paper correspond to Theorem 7.1, Corollary 8.2, and Definition 5.4 in the  K-theory archives preprint version, respectively.}\

Now take $w$ to be  the stupid weight structure for $\cu$ (see Remark \ref{rstws}(1)). Then the categories $\cu^+$ and $\cu^-$ are  idempotent  complete according to 
Proposition \ref{pkbu}(1). 
Thus  $\karw(\cu)$ is equivalent to $\cu$, whereas $\kar(\cu)$ is not, and we obtain the desired example. 


Lastly, applying Theorem \ref{tpkar}(III.3)  we conclude that a weight structure on a triangulated category does not necessarily extend to its  idempotent  completion.

This example also demonstrates that there exist rather "natural" triangulated categories that are not  idempotent  complete.

\subsection{An idempotent extension inside
${\protect\operatorname{Kar}}^w_{\max}({\protect\underline{C}})$
such that $w$ does not extend to it}\label{ssspkar}

Now we construct an example of $(\cu,w)$ and an idempotent extension  $\cu'$ of $\cu$ such that $\cu\cong \wkar{\cu}\subset$ $\cu'\subset \karw(\cu)=\kar(\cu)$, but  
$w$ does not extend to $\cu'$. Certainly, $w$ will not be bounded either above or below (cf. Theorem \ref{tpkar}(II.2)).  

Let $L$  be an arbitrary (fixed) field; denote by $\lvect$ the category of finite dimensional $L$-vector spaces. 

We start from describing our candidate for $\kar(\cu)$; it will be a certain full triangulated subcategory $\du$ of $ K(\lvect)$ (yet it will be convenient for us not to assume that $\du$ is strict in  $ K(\lvect)$, i.e., $\du$ will not be closed with respect to  $ K(\lvect)$-isomorphisms). We will write $M=(M^i)$ if the $L$-vector spaces $M^i$ are the terms of the complex $M$.

The objects $\du$ will be those  $M=(M^i)\in \obj K(\lvect)$ such that the dimensions of $M^i$ are bounded (by some constant depending on $M$). Obviously, $\du$ is a triangulated subcategory of $ K(\lvect)$, and it 
 contains the "standard" cone of any  $\du$-morphism (recall that $\du$  is not strict in $ K(\lvect)$). 
Note also that any 
$M\in \obj \du$ is 
 isomorphic to $M'\in \obj \du$ such that all the differentials of $M'$ are zero. In particular, it follows that $\du$ is idempotent complete. Moreover, the stupid weight structure for $ K(\lvect)$ certainly restricts to $\du$.

For any $M\in \obj \du$ consider the following sequences: 
$$a^j_M=\sum_{0\le i \le j}(-1)^i\dim_L(M^i)\text{ and } b^j_M=\sum_{0\le i \le j}(-1)^i\dim_L(M^{-i}),$$
where $j$ runs through non-negative integers. 
Note that if $M$ is a zero object of $\du$ then these sequences are bounded.

This fact implies the following one: if for $M\in \obj \du$ there exists a real number $\al_M$ such that the sequence $a^j_M-\al_M\cdot j$, $j\in \n$,  is bounded
(resp. $\be_M\in \re$ such that  $b^j_M-\be_M\cdot j$  is bounded) then for any $M'\in \obj \du$ that is isomorphic to $M$ the sequence $a^j_{M'}-\al_M\cdot j$
(resp. $b^j_{M'}-\be_{M}\cdot j$) is bounded also. Thus any real number $\gamma$ defines (obviously)  non-empty subsets $\du_\gamma^+$ and $\du_\gamma^-$  of $\obj \du$ characterized by the conditions $\al_M=\gamma$ and  $\be_M=\gamma$, respectively, and these sets are closed with respect to $\du$-isomorphisms.

Obviously, $\du_\gamma^+[1]=\du_{\,-\gamma}^+$ and $\du_\gamma^-[1]=\du_{\,-\gamma}^-$ for any $\gam\in \re$. Moreover, if $M_1\to M_2\to M_3\to M_1[1]$ is a distinguished triangle in $\du$ then $M_3$ is isomorphic to $\co(M_1\to M_2)\in \obj \du$; hence if there exist 
$\gamma_1,\gamma_2\in \re$ such that $M_i\in \du_{\gamma_i}^+$ (resp. $M_i\in \du_{\gamma_i}^-$)  for $i=1,2$ then $M_3$ belongs to $ \du_{\gamma_2-\gamma_1}^+$ (resp.  to $ \du_{\gamma_2-\gamma_1}^-$). Furthermore, all bounded above (resp. bounded below) objects of $\du$ belong to $\du_{0}^+$ (resp. to $\du_{0}^-$).

Now we are able to describe $\cu$ and $\cu'$. We take $\cu$ to be the subcategory of $\du$ whose object set is $\cup_{(l,m) \in \z\times \z} (\du_{l}^+\cap \du_{m}^-)$. The observations above imply that $\cu$ is a triangulated subcategory of $\du$; moreover, the stupid weight structure for $K(\lvect)\supset \du$ obviously restricts to $\cu$. Since $\cu$ contains $K^b(\lvect)$, the heart of this restricted weight structure $w$ is equivalent to $\lvect$; hence $\cu\cong \wkar{\cu}$. 
 Next, any object $M$ of $\du$ is a retract of an object of $\cu$ (easy; recall that we can assume the differentials of $M$ to be zero); hence $\karw(\cu)=\kar(\cu)\cong \du$.

Thus it remains to specify a triangulated subcategory $\cu'$ of $\du$ that contains $\cu$ and such that the stupid weight structure does not restrict to $\cu'$.
For this purpose it obviously suffices to take the object set of $\cu'$ to be equal to 
$\cup_{(l,r)\in \re\times \z} (\du_{l}^+\cap \du_{l+r}^-)$.


\section{A survey of motivic applications of Theorem \ref{tex}}\label{smot}

Now we describe the application of Theorem \ref{tex}(II) to various "relative motivic" categories. 

\subsection{On  relative motives and Chow weight structures for them: a reminder}

We consider some tensor triangulated categories of motives over  schemes that are separated and of finite type over a (fixed) base scheme $B$. 
We always assume that $B$ is Noetherian separated excellent of finite Krull dimension. We will  call schemes that are separated and of finite type over $B$ just $B$-schemes,  and a $B$-morphism is a  morphism between $B$-schemes. 
\footnote{So, for a $B$-scheme $Y$ a $B$-morphism into $Y$ is just a separated morphism of finite type.}\

Our main examples will be certain full subcategories of  triangulated categories of the following types.

\begin{exe}\label{emot}
\begin{enumerate}
\item\label{ie1}  {\it Beilinson motives.} For any  $B$ satisfying the aforementioned conditions one can consider the categories of Beilinson motives over 
  $B$-schemes. Recall that  Beilinson motives is a version of (generalized) Voevodsky motives with rational coefficients; they were one of the main subjects of  \cite{cd} (that heavily relied on \cite{ayoubsix}). 

\item\label{ie2}  {\it  $cdh$-motives.} If we assume in addition that $B$ is a scheme of characteristic $p$ for $p$ being a prime or zero,  then for any $\zop$-algebra $R$ (we set $\zop=\z$ if $p=0$) one can also consider $R$-linear $cdh$-motives $\dm_{cdh}(-,R)$   over  
 $B$-schemes (this is another version of Voevodsky motives that was studied in detail in \cite{cdint}).

\item\label{ie3}  {\it $K$-motives.}  For any $B$   and $Y$ being a $B$-scheme one can consider the $\lam$-linear version of  the homotopy category of modules over the symmetric motivic ring spectrum $\kglp_Y$, where  $\sss$ is a set of primes containing all primes non-invertible on $B$ and $\lam =\z[\sss\ob]$.
This means the following: as in \S13.3 of \cite{cd} (that relied on \cite{rso}) one should consider  a certain Quillen model for the  motivic stable homotopy category $\sht(Y)$, 
take the category of strict left modules over $\kglp_Y$ (that is a certain highly structured ring spectrum  weakly homotopy equivalent to the Voevodsky's $K$-theory spectrum $\kgl_Y$), and "invert the primes in $\sss$" using the corresponding well-known method (see \cite[\S A.2]{kellyth},  \cite[Appendix B]{levconv}, or \cite[Proposition 1.1.1]{bkl}). 
We will  (following ibid.) use the notation $\dk(Y)$ for this category and call its objects $K$-motives.

\item\label{ie4} {\it Cobordism-motives.}  For any $B$ as in example \ref{ie2}, a $B$-scheme $Y,$ any set of primes $\sss$ containing $p$, and $\lam =\z[\sss\ob]$ one can similarly take the $\lam$-linear version of  the category $\shmgl(Y)$ of strict left modules over the Voevodsky's spectrum $\mgl_Y$ (cf. \cite[Example 1.3.1(3)]{bondegl}).

\end{enumerate}
\end{exe}

Actually, 
 any couple $(B,\md)$ that satisfies a certain (rather long) list of 
 properties  is fine for our purposes; cf. \cite[\S3]{binters}. 

For  $Y$ being a $B$-scheme 
  the full tensor triangulated subcategory of compact objects in $\md(Y)$ (for $\md(-)$ being 	
	any of the four aforementioned motivic categories) will be denoted by $\mdc(Y)$ and its tensor unit will be denoted by $\oo_Y$.

All these categories can be endowed with the corresponding Chow weight structures. We will now present one of many equivalent definitions.
To do this we need to first discuss the six Ayoub-Grothendieck operations.

 For any 
$B$-morphism (of $B$-schemes) $f:X\to Y$ there are two pairs of adjoint functors
$$f_!:\mdc(X)\leftrightarrows \mdc(Y):f^! \text{ and } f^*:\mdc(Y)\leftrightarrows \mdc(X):f_*.$$ 
Next, for any $Y$ we have a natural splitting $g_*(\oo_{\p^1(Y)})\cong \oo_Y\bigoplus \oo_Y \lan -1\ra$ induced by the zero section $Y\to \p^1(Y)$, where $g:\p^1(Y)\to Y$ is the canonical projection, and $ \oo_Y \lan -1\ra$ is $\otimes$-invertible (this splitting can be used as the definition of  $ \oo_Y \lan -1\ra$).  For any $n\in \z$ we will write  $-\lan n \ra$  for the tensor product by the $-n$th power of $ \oo_Y \lan -1\ra$ in $\mdc(Y)\subset \md(Y)$; this is a certain version of Tate twist that is denoted by $-(n)[2n]$ in the  Voevodsky's convention introduced in \cite{1}.

\begin{defi}\label{dwchow}
Let $Y$ be a $B$-scheme.

1.  We will write $\mdc(Y)_{\wchow(Y)\ge 0}$ (resp.   $\mdc(Y)_{\wchow(Y)\le 0}$)  for the envelope of $\{f_*(\oo_P)\lan n \ra[i]\}$ (resp. of   $\{f_!(\oo_P)\lan n \ra[-i]\}$) for $f:P\to Y$ running through all $B$-morphisms with regular domain, $n\in \z$, and $i\ge 0$. 

2. The objects of the category 
$$\chow_{\md}(Y)=\kar_{\mdc(Y)}(\{f_*(\oo_P)\lan n \ra: f:P\to Y \text{ proper, } P  \text{ regular, }n\in \z\}$$ 
will be called {\it $\md$-Chow motives over $Y$}.
\end{defi}

\begin{pr}\label{pwchow}
Let $Y$ be a $B$-scheme.

1. Then the couple $(\mdc(Y)_{\wchow(Y)\le 0}, \mdc(Y)_{\wchow(Y)\ge 0}) $ gives a bounded weight structure $\wchow(Y)$  on $\mdc(Y)$ for $\md$ being any of the examples in \ref{emot}.\footnote{Moreover,  this "compact version" of $\wchow(Y)$ naturally extends to an unbounded weight structure on the whole $\md(Y)$; see \cite[\S2.3]{bkl} and \cite[Proposition 1.2.4]{binters}).} 

2. We have $\chow_{\md}(Y)\subset \hw_{\chow(Y)}$.
\end{pr}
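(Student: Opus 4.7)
My plan is to apply Theorem \ref{tex}(II) with $\cu' = \mdc(Y)$, taking $\cu'_- = \{f_!(\oo_P)\lan n\ra[-i]\}$ and $\cu'_+ = \{f_*(\oo_P)\lan n\ra[i]\}$ (indexed by $B$-morphisms $f\colon P\to Y$ with $P$ regular, $n\in \z$, and $i\ge 0$), and taking the dense-generating class to be $C'' = \{f_*(\oo_P)\lan n\ra : f\colon P\to Y \text{ proper, } P \text{ regular, } n\in \z\}$. The envelopes $\cu'_{w'\le 0}$ and $\cu'_{w'\ge 0}$ produced by Theorem \ref{tex} from $\cu'_-,\cu'_+$ then coincide by construction with the classes $\mdc(Y)_{\wchow(Y)\le 0}$ and $\mdc(Y)_{\wchow(Y)\ge 0}$ of Definition \ref{dwchow}, so verifying the hypotheses of Theorem \ref{tex}(II) will establish part~(1).

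Translation semi-invariance for $(\cu'_-, \cu'_+)$ is built into the indexing $i\ge 0$. The orthogonality axiom $\cu'_-\perp \cu'_+[1]$ reduces to the Hom-vanishing
$$\mdc(Y)(f_!\oo_P\lan n\ra,\ g_*\oo_Q\lan m\ra[k]) = 0 \qquad \text{for all } k\ge 1,\ P,\, Q \text{ regular,}$$
which is the principal technical obstacle. It can be treated by adjunction and base change in the six-functor formalism, reducing to an absolute-purity type vanishing computable on the fiber product $P\times_Y Q$; the required statements are established in \cite{bonivan} for Beilinson motives and have analogues in the cited references for each of the other three motivic setups of Example \ref{emot}. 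The dense generation of $\mdc(Y)$ by $C''$ is another standard input, a consequence of de Jong's alteration theorem combined with localization and base-change arguments in the six-functor formalism.

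Granting these two inputs, the remaining hypotheses of Theorem \ref{tex}(II) are immediate. Since $f$ is proper for every $c\in C''$, one has $f_* = f_!$, so $c$ lies simultaneously in $\cu'_-$ (at $i=0$) and in $\cu'_+$ (at $i=0$); in particular $c\in \cu'_{w'\le 0}\cap \cu'_{w'\ge 0}$. This supplies a trivial pre-weight decomposition for every shift $c[i]$ (take $X = c[i],\ Y = 0$ when $i\le 0$; take $X = 0,\ Y = c[i-1]$ when $i\ge 1$, using Translation Semi-Invariance), and simultaneously gives $i_c = i'_c = 0$ in the boundedness conditions of Theorem \ref{tex}(II), so the resulting weight structure $\wchow(Y)$ is bounded. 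This completes part~(1).

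For part~(2), any $M\in \chow_\md(Y)$ is by definition a $\mdc(Y)$-retract of some $f_*\oo_P\lan n\ra$ with $f$ proper and $P$ regular, and this generator lies in $\cu'_{w'=0}$ by the observation just made. Since $\cu'_{w'=0}$ is retraction-closed (axiom~(i) of Definition \ref{dwstr}), it follows that $M\in \hw_{\chow(Y)}$, as required.
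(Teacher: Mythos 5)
Your argument for part~(2) and your treatment of the orthogonality axiom match the paper's (the paper likewise reduces orthogonality to the computations of \cite{bonivan} and notes $f_*=f_!$ for proper $f$). However, part~(1) of your proposal has a genuine gap: you take as a ``standard input'' that the class $C''=\{f_*(\oo_P)\lan n\ra:\ f \text{ proper},\ P\text{ regular}\}$ densely generates $\mdc(Y)$, and everything else in your application of Theorem \ref{tex}(II) hinges on this. That statement is exactly the ``abundance of proper regular $Y$-schemes'' problem discussed in Remark \ref{rwchow}(3): it is a resolution-of-singularities-type assertion, and in the generality of the proposition (an arbitrary $B$-scheme $Y$ over a Noetherian separated excellent base of finite Krull dimension, with $\zop$-linear coefficients) it is \emph{not} known. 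De Jong's and Gabber's alterations give it only when $Y$ is of characteristic $0$, or essentially of finite type over a field, or (under weaker hypotheses) when the coefficient ring is a $\q$-algebra. So your route --- which is essentially Corollary \ref{cneg} applied to the negative subcategory of Chow motives, i.e.\ the ``first method'' of Remark \ref{rwchow}(3) --- proves the proposition only in those restricted cases, and would moreover yield $\chow_\md(Y)=\hw_{\wchow(Y)}$, a stronger conclusion than the inclusion actually asserted in part~(2).

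The paper's own proof of part~(1) is a citation: the existence of weight decompositions is taken from \cite{bonivan} and \cite{bkl}, whose arguments avoid generation by Chow motives altogether --- either by gluing weight structures along stratifications of $Y$, or by the explicit Gabber-style constructibility argument of \cite[\S3]{bkl}. If you want a self-contained argument in the spirit of Theorem \ref{tex}(II) that works in full generality, you must replace the trivial pre-weight decompositions of the shifts $c[i]$ by genuinely nontrivial ones supplied by an external geometric input (this is what Proposition \ref{pwdimf} provides in the proof of Corollary \ref{cmot}); the dense generation by proper regular generators cannot be assumed.
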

\begin{proof}
1. This fact was established in \cite{bonivan} for the example \ref{ie2}. Moreover, the methods of ibid. can actually be used in all  the four 
 examples; see \S2 of \cite{bkl}, Remark 3.4.3 of \cite{binters}, and Remark \ref{rwchow}(2--5) below for more detail. 

2. Recall that $f_*=f_!$ if $f$ is proper; the assertion follows immediately.

\end{proof}

\begin{rema}\label{rwchow}
1. If $Y$ is the spectrum of a perfect field $k$  and $\mdc(Y)=\dmgm(k)$ is the category of geometric Voevodsky motives (with coefficients in any ring),
$f$ is a  smooth morphism,  then the object $f_!f^!(\oo_Y)$ is isomorphic to the Voevodsky motif of the variety $P$ (over $k$); moreover,   $f_!f^!(\oo_Y)\cong f_!(\oo_X)\lan d\ra$ whenever all connected components of $X$ are of dimension $d$. Hence our definition of Chow motives over $Y$  generalizes the description of the subcategory of  Chow motives 
inside 
$\dmgm(k)$ (see \cite{1}); this is why the weight structures considered in this section are called Chow weight structures.

2.  Now we discuss the proof of Proposition \ref{pwchow}(1); this will also explain why we need Theorem \ref{tex} to prove Corollary \ref{cmot} below.

The classes $\mdc(Y)_{\wchow(Y)\le 0}$ and  $ \mdc(Y)_{\wchow(Y)\ge 0} $ are  retraction-closed by construction; the inclusions   $\mdc(Y)_{\wchow(Y)\le 0}\subset \mdc(Y)_{\wchow(Y)\le 0}[1]$ and \linebreak $ \mdc(Y)_{\wchow(Y)\ge 0}[1]\subset  \mdc(Y)_{\wchow(Y)\ge 0}  $  are automatic also. So we obtain axioms (i) and (ii) of Definition \ref{dwstr} for $\wchow(Y)$.

The proof of the orthogonality axiom (iii) is more complicated; yet the arguments used for the proof of \cite[Lemma 1.3.3]{bonivan} are easily seen to work fine in all of our examples \ref{emot}.

3. However, checking the existence of  $\wchow(Y)$-weight decompositions for objects of $\md^c(Y)$ 
is somewhat more complicated. The authors know three methods for proving this statement.

Firstly, one can try to verify that  $\md$-Chow motives strongly generate $\mdc(Y)$. Since the category $\chow_{\md}(Y)$ is  negative (in $\mdc(Y)\subset \md(Y)$; this fact follows from the orthogonality axiom for $\wchow(Y)$ that we have just discussed), this assertion would yield the remaining axiom (iv) (see Corollary \ref{cneg}). One would also obtain  $\chow(Y)=\hw_{\chow(Y)}$. 

Yet to prove that $\chow(Y)$ strongly generates $\mdc(Y)$ one requires some statement on the "abundance" of proper $Y$-schemes that are regular; thus it is a certain resolution of singularities problem.
In the case where $Y=\spe k$, $k$ a characteristic $0$ field, it was proved in \cite[Corollary 3.5.5]{1} (using Hironaka's resolution of singularities) that the subcategory of $\mdc(Y)$  strongly generated by the motives of smooth projective $k$-varieties also contains the motives of all smooth varieties. Thus the category  $\lan \obj \chow(k)\ra_{\dmgm(k)}$ is dense in $\dmgm(k)$. Next, a formal argument (essentially using weight decompositions) was applied in \cite{mymot} to prove that $\lan \obj \chow(k)\ra_{\dmgm(k)}$ actually equals $\dmgm(k)$. This method of proof can be applied to all of our four examples of $\mdc(Y)$ (see Example \ref{emot})   whenever $Y$ is of characteristic $0$ (i.e., if it is a $\spe \q$-scheme); see Theorem 2.4.3 of \cite{bondegl}.
For other $Y$ one needs certain alterations (de Jong's ones for rational coefficients and Gabber's  ones in the general case of our Example \ref{emot}(\ref{ie2}--\ref{ie4})) and somewhat more complicated "formal" arguments. So, if the coefficient ring is not a $\q$-algebra then  our current level of knowledge enables us to prove that $\mdc(Y)$ is  strongly generated by $\md$-Chow motives over $Y$ only under the assumption that $Y$ is 
essentially of finite type over a field; see  \cite{bzp} (cf. also \cite[Proposition 5.5.3]{kellyth}) for the case $Y=\spe k$ and \cite[\S2.3]{bonivan} for the general case. For rational coefficients substantially weaker assumptions on $Y$ are sufficient (cf. \cite[\S2.4]{bondegl}); the corresponding method of constructing $\wchow$ was applied in \cite{hebpo}. It appears  that these assumptions on $Y$ (and $B$) are also sufficient to ensure (more or less) "easily" that the 
 weight structure 
 $\wchow(Y)$ restricts (see Remark \ref{restr}) to the levels of the 
dimension filtration for $\mdc(Y)$ (that we will describe below; see Corollary \ref{cmot}).\footnote{Thus one does not need Theorem \ref{tex} to prove Corollary \ref{cmot} in these cases.}
 However, this argument was never written down in the general case (yet cf.  \cite[\S2.4]{bondegl} for a somewhat related reasoning).

4. One more possible definition of $\wchow(Y)$ for the Example \ref{emot}(\ref{ie1}) (for a "general" $Y$) was given in \cite[\S2.3]{brelmot}; it used stratifications of $Y$ (and it was essentially proven in \cite[Theorem 2.2.1]{bonivan} that this definition is equivalent to our Definition \ref{dwchow}(1)). 
Then one can proceed to prove the existence of weight decompositions using the gluing of weight structures argument described in \cite[\S2.3]{brelmot} and \cite[\S2.1]{bonivan} (this is the second method of checking axiom (iv) of Definition \ref{dwstr} for 
$\wchow(Y)$).\footnote{Recall that for $Z$ being any closed subscheme of $Y$ and $U=Y\setminus Z$ the categories $\md(Y)$, $\md(Z)$, and $\md(U)$ along with the natural functors connecting them  yield a {\em gluing datum}   in the sense  of \S1.4.3 of \cite{bbd}; cf. Proposition 1.1.2(10) of \cite{brelmot}. Furthermore, weight structures can be "glued" in this setting according to  Theorem 8.2.3 of \cite{bws}. One also needs certain "continuity" arguments to "glue $\wchow(Y)$ from the Chow weight structures over points of $Y$".}\ Yet this formal argument  does not yield much information on "weights" and weight decompositions of ("concrete") objects of $\mdc(Y)$. In particular, if one considers (following \cite{1}; see below) a certain 
dimension filtration for $\mdc(Y)$ then the gluing argument does not imply that an object belonging to some level of 
this filtration possesses a weight decomposition inside this level.\footnote{The problem is that some of the functors in the aforementioned gluing datum do not respect this filtration.}\

5. To overcome the latter difficulty the third method  
of studying $\wchow$-decompositions (that is more "explicit" than the second one)  was developed in  \cite[\S3]{bkl}. It uses  quite complicated "geometric" arguments (and relies on \cite[Theorems IX.1.1, II.4.3.2]{illgabb}) 
 and is closely related to  Gabber's arguments applied in \cite[\S XIII]{illgabb} to the study of 
 constructibility for complexes of \'etale sheaves.\footnote{Recall also that a reasoning very similar to Gabber's one was  applied to the study of  constructibility of motives in \cite[\S4.2]{cd}.}\ Unfortunately, the corresponding Theorem 3.4.2 of \cite{bkl} is too complicated to be formulated here. So (following \cite[\S3.4]{binters}) we formulate some of its consequences instead (in Proposition \ref{pwdimf}).

6. Recall that the behaviour of the categories $\mdc(-)$ and of the "weights" of their objects is quite similar to that of mixed $\overline{\mathbb{Q}}_l$-complexes of \'etale sheaves and their weights as studied in \cite{bbd}.\footnote{This observation was treated in \S3 of \cite{brelmot}. Note however that "weights" for  mixed complexes of \'etale sheaves do not correspond to any weight structures; see Remark 2.5.2 of \cite{bmm}. On the other hand, we have the (self-dual) perverse $t$-structure $p_{1/2}$ for mixed complexes of sheaves that "respects weights", whereas the existence of its motivic analogue (essentially suggested by Beilinson; cf. \cite{bmm}) is an extremely hard conjecture (that may be true only for motives with coefficients in a $\q$-algebra).}\

In particular, if $X$ is regular then the object $\oo_X$ is a Chow motif over $X$; so it belongs to $\mdc(X)_{\wchow=0}$ (cf. Theorem 5.3.8 of \cite{bbd}). Next, for any $B$-morphism $f$ the functors $f_*$ and $f_!$  possess certain weight-exactness properties with respect to 
	the corresponding Chow weight structures (see Theorem 2.2.1 of \cite{bonivan}; 	cf.  the 'stabilities' 5.1.14 of \cite{bbd}).  Moreover, the functor $-\lan n \ra$ is weight-exact for any $Y$ and any $n\in \z$. 
	
	These observations "motivate" the description of $\wchow$ given in Definition \ref{dwchow}. 
	
\end{rema}

\subsection{On dimension filtrations  and restrictions of $\wchow_{\md}(-)$ to its levels}

Now it is the time to define the dimension filtration for $\mdc(Y)$. 
One of the problems here is that to obtain a "satisfactory" filtration we need some sort of dimension function $\de$ on $B$-schemes; the reason is that we want some notion of dimension that would satisfy the following property: if $U$ is open dense in $X$ then its ``dimension'' $\de(U)$ should be equal to 
$\de(X)$. So we give the following definition following \cite[Definition 3.1.1]{binters}.

\begin{defi}\label{ddf}
\begin{enumerate}
\item\label{ddf-2}  Let $\de^B$ be a 
function from the set $\bb$ of Zariski points of  $B$ into 
 integers\footnote{In some of the formulations of \cite{binters} it is convenient to assume that the values of $\de^B$ are non-negative; yet this additional restriction is easily seen to be irrelevant for the purposes of the current paper.} 
 that satisfies the following condition: if  $b\in \bb$ and a point $b'\in \bb$ belongs to its closure  then $\de(b)\ge \de(b')+\codim_{b}b'$. 

Then for $y$ being a generic point of a 
 $B$-scheme $Y$ (so, $y$ is the spectrum of a field; certainly, we can assume $Y$ to be connected here) and $b\in \bb$ being the image of $y$ in $B$ we set $\de(y)=\de^B(y)=\de^B(b)+\operatorname{tr.\,deg.}k(y)/k(b)$, where $k(y)$ and $k(b)$ are the corresponding  fields. 

\item\label{ddf-3}  For $Y$ being an $B$-scheme we define $\de(Y)$ as the maximum over points of $Y$ of $\de(y)$. 

\end{enumerate}
\end{defi}

\begin{rema}
 In the case where $B$ is a Jacobson scheme all of whose components are equicodimensional one may take $\de$ to be equal to the Krull dimension function.\footnote{In particular, this statement may be applied in the case where $B$ is of finite type over $\spe \z$ or over $\spe k$; yet the latter case is not really interesting to us due to the reasons described above.}\ More generally, one may take $\de$ to be a "true" dimension function as described in \cite[\S XIV.2]{illgabb}
 (cf. \S1.1 of \cite{bondegl}).  
\end{rema}

Now we fix $\de^B$ and the corresponding $\de$ (till the end of the paper). 
The corresponding dimension filtration on $ \mdc(Y)$ is defined as follows: for any $j\in \z$ we take $\mdc_{\de \le j}(Y)$ to be the 
subcategory of  
 $\mdc(Y)$ that is densely generated (see \S\ref{snotata})
by $\{f_!(\oo_P)\lan \de(P)\ra\}$ for $f:P\to Y$ running through all  $B$-morphisms  with $\de(P)\le j$ and regular $P$.

We 
recall the main ingredient of the proof of Theorem 3.4.2(I) of \cite{binters}.

\begin{pr}\label{pwdimf}
For any $j\in \z$ the category  $\mdc_{\de \le j}(Y)$ contains the objects $f_*(\oo_P)\lan \de(P) \ra$ and $f_!(\oo_P)\lan \de(P) \ra$  whenever  $f:P\to Y$ is a  $B$-morphism with $\de(P)\le j$. 
 
Moreover, if $f_0:P_0\to Y$ is a  $B$-morphism,  $\de(P_0)=j$, then for the object $M=f_{0!}(\oo_{P_0})\lan j\ra$ and any $m\in \z$
there exists a distinguished triangle $L\to M\to R\to L[1]$ with $R$ (resp. $L$) belonging to the envelope of $u_*(\oo_U)\lan \de(U) \ra[m+i+1]$ (resp. of   $u_!(\oo_U)\lan \de(U) \ra[m-i]$) for $u:U\to Y$ running through all $B$-morphisms with regular domain and $\de(U)\le j$, and $i\ge 0$. 
\end{pr}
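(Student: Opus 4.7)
My plan is to extract both assertions from the alteration-based ``dimension-controlled Chow tower'' construction of \cite[\S3]{bkl}, whose purpose is precisely to refine the basic $\wchow$-weight decompositions so that they respect the dimension filtration. The key geometric inputs are Gabber's refinement of de Jong's alteration theorem — giving, for any $B$-scheme $X$ with $\de(X)\le j$, a proper surjection $\pi\colon X'\to X$ with $X'$ regular, $\de(X')=\de(X)$, and $\pi$ generically finite \'etale of degree invertible in the coefficient ring — together with the localization triangle attached to any dense open immersion $U\hookrightarrow X$, whose closed complement has strictly smaller $\de$.

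For the first assertion I proceed by Noetherian induction on $\de(P)$. When $P$ is regular, $f_!(\oo_P)\lan \de(P)\ra$ lies in $\mdc_{\de\le j}(Y)$ by definition, and $f_*(\oo_P)\lan \de(P)\ra$ is handled by choosing a Nagata compactification $f=\bar f\circ u$ together with a Gabber alteration of $\bar P$, whose boundary has strictly smaller $\de$. In the singular case I choose a Gabber alteration $\pi\colon P'\to P$ that is finite \'etale over a dense open $U\subset P$; then $\oo_P|_U$ is a retract of $(\pi_*\oo_{P'})|_U$ once the degree of $\pi$ is inverted, and the localization triangle for $U\hookrightarrow P$ presents $f_!(\oo_P)\lan \de(P)\ra$ as an extension of $(f\pi)_!(\oo_{P'})\lan \de(P')\ra$ (which belongs to $\mdc_{\de\le j}(Y)$ by the regular case applied to $P'$) and of objects attached to the boundary of strictly smaller $\de$ (handled inductively). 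The proof for $f_*$ is parallel.

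For the second assertion I mimic the iterative decomposition procedure of \cite[Theorem 3.4.2]{bkl}. One alteration step applied to $P_0$ produces a distinguished triangle whose middle vertex is (a retract of) a Chow-type term $u_!(\oo_U)\lan \de(U)\ra$ with $U$ regular and $\de(U)=j$, and whose outer vertices are supported on strata with $\de<j$; iterating while alternately absorbing the resulting error terms into an evolving $L$ at shift $[m-i]$ and into $R$ at shift $[m+i+1]$ yields the triangle $L\to M\to R\to L[1]$ after finitely many steps. Termination is guaranteed by the boundedness of $\wchow$ restricted to each fixed dimensional level (Proposition~\ref{pwchow}) combined with Noetherian induction on $\de$. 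The main obstacle is the careful bookkeeping that keeps every intermediate cone inside $\mdc_{\de\le j}(Y)$ and that matches the shift indices $m-i$ and $m+i+1$ to the peeling direction; this is the content of the ``dimension-respecting'' refinement and ultimately rests on the fact that $\de$ cannot increase under an alteration or under passage to a closed subscheme.
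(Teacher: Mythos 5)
Your proposal and the paper's proof diverge in an important way: the paper does not actually prove this proposition internally --- its entire proof is a reduction to \cite[Theorem 3.4.2]{bkl} (with \cite[Proposition 3.4.1(2)]{binters} cited for more detail), a result the authors explicitly describe elsewhere as ``too complicated to be formulated here.'' What you have written is an attempt to reconstruct the geometric content of that external theorem. As a roadmap it points in the right direction: Gabber's alterations, localization triangles for dense open immersions with lower-dimensional complement, and Noetherian induction on $\de$ are indeed the ingredients of \cite[\S 3]{bkl}.

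As a proof, however, it has genuine gaps. First, Gabber's theorem does not supply a single alteration ``generically finite \'etale of degree invertible in the coefficient ring'': for each prime $\ell$ invertible on the base it gives an $\ell'$-alteration whose degree is merely prime to $\ell$. For a general $\zop$-algebra $R$ no single such degree need be invertible in $R$, so to exhibit $\oo_P$ generically as a retract one must combine several alterations of coprime degrees; this is the content of \cite{bzp} and of \cite[\S 5.5]{kellyth} and cannot be elided. Second, your termination argument for the ``moreover'' part appeals to ``the boundedness of $\wchow$ restricted to each fixed dimensional level,'' but Proposition \ref{pwchow} only gives boundedness on all of $\mdc(Y)$; that $\wchow$ restricts to the levels $\mdc_{\de\le j}(Y)$ is precisely what Corollary \ref{cmot} is meant to deduce \emph{from} the present proposition, so this appeal is circular. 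Finally, the shift bookkeeping (the indices $[m-i]$ versus $[m+i+1]$) that makes $L\to M\to R\to L[1]$ a genuine pre-weight decomposition staying inside the $\de\le j$ level is exactly the hard part of \cite[Theorem 3.4.2]{bkl}; saying that it ``is the content of the dimension-respecting refinement'' restates the claim rather than establishing it. If your intention is to cite \cite{bkl} for these points, the argument collapses to the paper's own one-line proof; if your intention is to prove them, substantially more detail is required.
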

\begin{proof}
This is an easy consequence of  \cite[Theorem 3.4.2]{bkl}; see also Proposition 3.4.1(2) of \cite{binters} for some more detail.
\end{proof}
 
 Let us now 
prove the main statement of this section 
(that generalizes  Corollary \ref{cint}). We take $Y=B$ in it since this does not affect the generality of the statement.

\begin{coro}\label{cmot} Assume that the couple $(B,\md)$ is of any of the types \ref{ie1}--\ref{ie4} described in  Example \ref{emot}.
Consider the Chow weight structure $\wchow(B)$ on $\mdc(B)$ (see 
Proposition \ref{pwchow}(1)).
Then $\wchow(B)$ restricts (see Remark \ref{restr}) to  $\mdc_{\de \le j}(B)$.

\end{coro}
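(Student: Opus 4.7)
The plan is to apply Theorem \ref{tex}(II) to $\cu' = \mdc_{\de \le j}(B)$ in order to produce a weight structure $w'$ on it, and then to invoke Remark \ref{restr} to conclude that $\wchow(B)$ restricts to $\mdc_{\de \le j}(B)$ and agrees with $w'$.

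For the input of Theorem \ref{tex}(II), I take
\[
\cu'_- = \{f_!(\oo_P)\lan \de(P)\ra[-i]: f: P\to B,\ P \text{ regular},\ \de(P)\le j,\ i\ge 0\},
\]
\[
\cu'_+ = \{f_*(\oo_P)\lan \de(P)\ra[i]: f: P\to B,\ P \text{ regular},\ \de(P)\le j,\ i\ge 0\},
\]
and $C''=\{f_!(\oo_P)\lan \de(P)\ra : f: P\to B,\ P \text{ regular},\ \de(P)\le j\}$. By the first assertion of Proposition \ref{pwdimf}, $\cu'_-\cup \cu'_+ \subset \obj \mdc_{\de\le j}(B)$. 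Translation semi-invariance is immediate from the shape of the classes, and the orthogonality $\cu'_- \perp \cu'_+[1]$ is inherited from that of $\wchow(B)$ since $\cu'_- \subset \mdc(B)_{\wchow\le 0}$ and $\cu'_+\subset \mdc(B)_{\wchow\ge 0}$. By the definition of $\mdc_{\de\le j}(B)$, the class $C''$ densely generates $\cu'$, and each $c\in C''$ already lies in $\cu'_-$, providing the required index $i_c=0$ for Theorem \ref{tex}(II).

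The substantive step is the existence of pre-weight decompositions for every $c[i]$ with $c\in C''$ and $i\in\z$; this is precisely what the ``moreover'' clause of Proposition \ref{pwdimf} delivers. Given $c = f_!(\oo_P)\lan \de(P)\ra$ with $\de(P)\le j$, apply the proposition with its $j$ set to $\de(P)$, its $f_0$ set to $f$, and its $m$ set to $-i$: the result is a distinguished triangle $L\to c\to R\to L[1]$ in which $L$ (resp.\ $R$) lies in the envelope of $\{u_!(\oo_U)\lan \de(U)\ra[-i-k]\}_{k\ge 0}$ (resp.\ of $\{u_*(\oo_U)\lan \de(U)\ra[-i+k+1]\}_{k\ge 0}$) over regular $u: U\to B$ with $\de(U)\le \de(P)\le j$. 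After shifting by $[i]$, the triangle $L[i]\to c[i]\to R[i]\to L[i][1]$ places $L[i]$ in the envelope of $\cu'_-$ and $R[i]$ in the envelope of $\cu'_+[1]$, which is exactly the shape of a pre-weight decomposition required by Theorem \ref{tex}.

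Theorem \ref{tex}(II) then yields a weight structure $w'$ on $\mdc_{\de\le j}(B)$ whose $\le 0$ and $\ge 0$ classes are the envelopes (inside $\cu'$) of $\cu'_-$ and $\cu'_+$. Since these envelopes are contained in $\mdc(B)_{\wchow\le 0}$ and $\mdc(B)_{\wchow\ge 0}$ respectively, the inclusion $\mdc_{\de\le j}(B)\hookrightarrow \mdc(B)$ is weight-exact, and Remark \ref{restr} gives the sought restriction. The only non-formal ingredient is Proposition \ref{pwdimf}, which in turn rests on the alteration-theoretic geometric input from \cite{bkl}; once this is granted, the proof reduces to a direct and essentially formal verification of the hypotheses of Theorem \ref{tex}(II).
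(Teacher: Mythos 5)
Your proposal is correct and follows essentially the same route as the paper: the same choice of $\cu'_-$, $\cu'_+$, and $C''$, orthogonality inherited from $\wchow(B)$, dense generation by $C''\subset\cu'_-$, and Proposition \ref{pwdimf} supplying the pre-weight decompositions of the shifts $c[i]$ before invoking Theorem \ref{tex}(II). You merely make explicit the choice $m=-i$ in the ``moreover'' clause of Proposition \ref{pwdimf}, which the paper leaves implicit.
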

\begin{proof}
Given the results mentioned above, this is an easy application of  Theorem \ref{tex}(II).

 We set $\cu'=\mdc_{\de \le m}(B)$ and take $\cu'_-=\{u_!(\oo_U)\lan \delta(U)\ra[-s]\}$ and $\cu'_+=\{u_*(\oo_U) \lan \delta(U)\ra[s] \}$   for $u:U\to B$ running through   
 $B$-morphisms  with regular domain and $\de(U)\le j$, 
and $s\ge 0$. These two classes obviously satisfy axiom (ii) of Definition \ref{dwstr}. Moreover, we have $\cu'_-\perp \cu'_+[1]$ since $\cu'_-\subset \mdc(B)_{\wchow(B)\le 0}$ and $\cu'_+\subset \mdc(B)_{\wchow(B)\ge 0}$.

We take $C''$ to be the set of all $f_!(\oo_P)\lan \de(P)\ra$ for $P$ being regular,  $\de(P)\le j$, and $f$ being  a $B$-morphism. 
Then $C''$ densely generates $\cu'$ and we have $C''\subset \cu_-$. According to Proposition \ref{pwdimf}, any element of $C''[i]$ for $i\in \z$ possesses a pre-weight decomposition with respect to the corresponding $\cu'_{w'\le 0}$ and  $\cu'_{w'\ge 0}$. Thus we can apply  Theorem \ref{tex}(II) to conclude the proof.

\end{proof}

\begin{rema}\label{rwdimf}
1.  It certainly follows that  $\wchow(B)$ 
also restricts to the union   $\cup_{j\in \z}\mdc_{\de \le j}(B)$ that will be denoted by $\mdc_{eff}(B)$. The latter fact 
  is certainly (formally) weaker than the existence 
	of all the restrictions to $\mdc_{\de \le j}(B)$; yet the authors do not know any  proof of it that does not rely on \cite{bkl}.

2. We will  call $\mdc_{eff}(B)$ the subcategory of $\delta$-effective objects of $\mdc(B)$. Note here that $\mdc_{eff}(B)$ 
 is the subcategory of  $\mdc(B)$ that is densely generated
 by $\{f_!(\oo_P)\lan \de(P)\ra\}$ for $f:P\to B$ running through all $B$-morphisms.  

This definition originates  from Definition 2.2.1 of \cite{bondegl}; it is also closely related to an earlier definition from  \cite[\S2]{pelaez}. 
 
Recall also that the definition of Voevodsky motives (in \cite{1}) actually "started from" certain effective motivic categories. It seems that this method does not work so nicely for general ("relative") motivic categories;  still our definition of $\mdc_{eff}(B)\subset \mdc(B)$ essentially generalizes the one of ibid. according to  \cite[Example 2.3.13(1)]{bondegl}.

3. Certainly, having the category $\mdc_{eff}(B)$ one can also consider the {\it slice filtration} of $\mdc(B)$ by the subcategories $\mdc_{eff}(B)\lan i \ra$ ($=\mdc_{eff}(B)(i)$) for $i$ running through integers. Note also that the union $\cup_{i\in \z}\mdc_{eff}(B)\lan i\ra$ equals $\mdc(B)$. 
Moreover, Theorem 3.4.2(II)  (along with Remark 3.4.3(1)) of \cite{binters} easily implies that the intersection of $\mdc_{eff}(B)\lan i\ra$ for all $i\in \z$ is zero for $\md$ being as in Example \ref{emot}(\ref{ie1}, \ref{ie2}, or \ref{ie4}). On the other hand, $K$-motives are {\it periodic}, i.e., $\oo_B\lan i\ra \cong\oo_B$ for any $i\in \z$ (and so, all $\dk^c_{eff}(B)\lan i \ra$ are equal to $\dk^c(B)$).


4. These two types of filtrations for the "whole"  $\md(-)$ were the main subject of \cite[\S3]{binters}.

\end{rema}



\end{document}